\theoremstyle{plain}
\newtheorem{theorem}{Theorem}[section]
\newtheorem{corollary}[theorem]{Corollary}
\newtheorem{lemma}[theorem]{Lemma}
\newtheorem{definition}[theorem]{Definition}
\newtheorem{example}[theorem]{Example}
\newtheorem{remark}[theorem]{Remark}
\newtheorem{assumptions}[theorem]{Assumptions}
\newcommand{\dx}{\textnormal{d}\mathbf{X}}
\newcommand{\Id}{\mbox{Id}}
\newcommand{\gubnorm}[2]{\left\|#1,#2 \right\|_{X,2\alpha,\gamma}}
\newcommand{\gubnormpar}[3]{\left\|#1,#2 \right\|_{X,2\alpha,#3}}
\newcommand{\xx}{X^{(2)}}
\newcommand{\bX}{\mathbf{X}}
\newcommand{\xxt}{\tilde{X}^{(2)}}
\newcommand{\IN}{\mathbb{N}}
\title{Global solutions for semilinear rough partial differential equations}
\author{Robert Hesse\thanks{Friedrich Schiller University Jena, Department of Mathematics, Ernst-Abbe-Platz 2, 07743 Jena, Germany. E-Mail: robert.hesse@uni-jena.de.} 
~~
and Alexandra Neam\c tu\thanks{University of Konstanz, Department of Mathematics and Statistics,  Universit\"atsstr.~10, 78464 Konstanz, Germany. E-Mail: alexandra.neamtu@uni-konstanz.de}}
\def\txtd{{\textnormal{d}}}
\def\cO{\mathcal{O}}
\def\cB{\mathcal{B}}
\def\cL{\mathcal{L}}
\def\cX{\mathcal{X}}
\begin{document}

\maketitle

\begin{abstract}
    We construct global-in-time solutions for semilinear parabolic rough  partial differential equations. We work on a scale of Banach spaces tailored to the controlled rough path approach and derive suitable a-priori estimates of the solution which do not contain quadratic terms.
\end{abstract}

\textbf{Keywords}: global solutions, stochastic evolution equations, rough path theory.\\

\textbf{MSC}: 60H15, 60H05, 60G22, 37L55.

\section{Introduction}

The main goal of this work is to advance the theory of global solutions for semilinear parabolic rough partial differential equations (RPDEs). Since the breakthrough in rough paths theory for stochastic ordinary differential equations,
there has been a strong interest in investigating rough path approaches for partial differential equations. However, there are few results regarding global well-posedness of solutions for partial differential equations perturbed by nonlinear rough multiplicative noise. We contribute to this aspect and establish global-in-time solutions for semilinear parabolic RPDEs. We fix a time horizon $T>0$ and consider on a separable Banach space $(\cB,|\cdot|)$ the rough evolution equation
\begin{align}\label{equation:main}
\begin{cases}
\txtd y_t = [A y_t + F (y_t)]~\txtd t + G(y_t)~\dx_t,~~t\in[0,T]\\
y(0)=y_0\in \cB.
\end{cases}
\end{align}
We assume that the linear operator $A$ generates an analytic $C_0$-semigroup $(S(t))_{t\geq 0}$ on $\cB$ and the noise $\bX=(X,\xx)$ is a finite-dimensional $\alpha$-H\"older rough path~\cite{FrizHairer}, for $\alpha\in(\frac{1}{3},\frac{1}{2})$, as specified below. A famous example is constituted by fractional Brownian motion with Hurst index $H\in(1/3,1/2]$.  The drift term $F$ and the diffusion coefficient $G$ satisfy suitable smoothness conditions. \\
Several approaches have been used in order to investigate RPDEs. For instance, for RPDEs perturbed by transport noise, solutions satisfying energy estimates have been constructed using the notion of unbounded rough drivers and the rough Gronwall lemma~\cite{HH,HNS20, Hofmanova1}.
On the other hand for RPDEs perturbed by nonlinear multiplicative noise, the semigroup approach has been employed by~\cite{DeyaGubinelliTindel, GubinelliTindel, GHairer, GHN, HN19} and the references specified therein. In this setting, the main task is to define the rough convolution
$\int_0^t S(t-s)G(y_s)~\dx_s$. To this aim one first of all needs the notion of a controlled rough path~\cite{Gubinelli}, which is a pair $(y,y')$ of $\alpha$-H\"older continuous functions
satisfying an abstract Taylor-like expansion in terms of H\"older regularity
given by
\begin{align*}
y_t= y_s+y'_s X_{s,t}+ R^{y}_{s,t},
\end{align*}
where the remainder $R^y_{s,t}$ is supposed to be $2\alpha$-H\"older-regular. Due to the lack of regularity of the semigroup $(S(t))_{t\geq 0}$ in zero, it is a challenging task to find an appropriate meaning of a controlled rough path. The main idea is to consider controlled rough paths on a scale of Banach spaces $(\cB_\gamma)_{\gamma\in\mathbb{R}}$ satisfying the interpolation inequality 
\begin{align*}
    |x|^{\gamma-\theta}_\beta \lesssim |x|^{\gamma-\beta}_\theta |x|^{\beta-\theta}_\gamma,
\end{align*}
which holds for $\theta\leq \beta\leq \gamma$ and $x\in \cB_\gamma$~\cite{Lunardi}.
The advantage of this approach is that it allows one to view the semigroup as a bounded operator on all these spaces and exploit space-time regularity specific to the parabolic setting. Such an approach was exploited in~\cite{GHN} in the context of non-autonomous RPDEs and in~\cite{GHairer},
where the semigroup was directly incorporated in the definition of the controlled rough path. \\
However, global well-posedness results for RPDEs are more complicated to obtain, due to the quadratic terms which occur in the a-priori estimates of the solution. These arise in the composition of a controlled rough path $(y,y')$ with a smooth function $G$ which is naturally given by $(G(y),DG(y)y')$, see~\cite{FrizHairer}.
This operation involves Taylor expansions of the nonlinear term $G$ yielding a quadratic estimate for the norm of $(G(y),DG(y)y')$ in terms of the controlled rough path norm of $(y,y')$. Therefore a-priori estimates by a direct application of the Gronwall lemma are not possible. This issue was solved only under certain boundedness assumptions on the diffusion coefficient, see for example~\cite{HN20,Hofmanova1}. In this work we additionally incorporate a drift term in~\eqref{equation:main} that satisfies a linear growth condition and impose a boundedness restriction on the diffusion coefficient to derive global-in-time existence of solutions. \\
The global well-posedness of RPDEs is a crucial step in studying their long-time behavior. For instance, there are results regarding the existence of random dynamical systems generated by RPDEs with transport~\cite{Hofmanova1,Hofmanova2}, nonlinear multiplicative~\cite{HN20} and nonlinear conservative noise~\cite{Gess}. Since the solutions are constructed in a pathwise sense, the usual issue with nullsets from the theory of random dynamical systems~\cite{Arnold} does not occur in this approach. Therefore, the existence of a random dynamical system (Theorem~\ref{thm:rds}) is an immediate consequence of our main result (Theorem~\ref{global:sol}).\\
This work is structured as follows. In Section~\ref{sec:prelim} we collect important properties regarding rough paths and analytic $C_0$-semigroups on interpolation spaces. Section~\ref{sec:main} contains our main results regarding the existence of a global-in-time solutions for semilinear rough partial differential equations. To this aim we provide a suitable estimate of the controlled rough integral together with an a-priori bound of the solution, which does not contain quadratic terms. This is obtained using the structure of the solution of an RPDE and imposing certain boundedness restrictions on the diffusion coefficient. It would be desirable to extend the global-in-time existence of solutions to RPDEs with a dissipative drift, as considered in the finite-dimensional case in~\cite{Weber}. We present some applications in Section~\ref{sec:ex}.

\section{Preliminaries}\label{sec:prelim}
We first provide some fundamental concepts from rough path theory starting with the definition of a $d$-dimensional $\alpha$-H\"older rough path.

\begin{definition}\label{hrp}\emph{($\alpha$-H\"older rough path)}
	Let $J \subset \mathbb{R}$ be  a compact interval. We call a pair $\mathbf{X}=(X,\xx)$ $\alpha$-H\"older rough path if $X\in C^{\alpha}(J, \mathbb{R}^d)$ and $\xx\in C^{2\alpha}(\Delta_{J}, \mathbb{R}^d\otimes \mathbb{R}^d)$, where $\Delta_{J} := \left\{\left(s,t \right) \in J^2 \colon  s \leq t \right\}$.   
 Furthermore $X$ and $\xx$ are connected via Chen's relation, meaning that
	\begin{align}\label{chen}
	\xx_{s,t} - \xx_{s,u} - \xx_{u,t} = (X_{u} - X_{s})\otimes ( X _{t}- X_{u}) 
	,~~ \mbox{for } s,u,t \in J,~~ s \leq u \leq t .
	\end{align}
	In the literature $\xx$ is referred to as L\'evy-area or second order process.	
\end{definition}
Throughout this manuscript, we assume for simplicity that $d=1$ and further introduce an appropriate distance between two $\alpha$-H\"older rough paths.
\begin{definition}
	Let $J\subset\mathbb{R}$ be a compact interval and let $\mathbf{X}$ and $\mathbf{\tilde{X}}$ be two $\alpha$-H\"older rough paths. We introduce the $\alpha$-H\"older rough path (inhomogeneous) metric
	\begin{align}\label{rp:metric}
	d_{\alpha,J}(\mathbf{X},\mathbf{\tilde{X}} )
	:= \sup\limits_{(s,t)\in \Delta_J} \frac{|X_{t}-X_{s}-\tilde{X}_{t}+\tilde{X}_{s}|}{|t-s|^{\alpha}} 
	+ \sup\limits_{(s,t) \in \Delta_{J}}
	\frac{|\xx_{s,t}-\xxt_{s,t}|} {|t-s|^{2\alpha}}.
	\end{align}
	We set $\rho_\alpha(\mathbf{X}):=d_{\alpha,[0,T]}(\mathbf{X},0)$.
\end{definition}
For more details on this topic consult~\cite[Chapter 2]{FrizHairer}. We stress that in our situation we always have that $X_0=0$ and therefore~\eqref{rp:metric} is a metric.\\
Throughout this manuscript $C$ stands for a universal constant which varies from line to line. We write $a \lesssim b$ if there exists a constant $C>0$ such that $a\leq C b$. The constant $C$ can depend on the parameters $\alpha,\gamma,\rho_\alpha(\bX)$ as well as on $F$ and $G$ and their derivatives but it is independent of the initial data $y_0$. Moreover it can also depend on time but it is uniformly with respect to $T$ on compact intervals.\\

Since we consider parabolic RPDEs, we work with the following function spaces similar to~\cite{GHairer,GHN}.
\begin{definition}\label{raum}
A family of separable Banach spaces $(\cB_\theta,|\cdot|_\theta)_{\theta\in\mathbb{R}}$ is called a monontone family of interpolation spaces if for $\beta_1\leq \beta_2$, the space $\cB_{\beta_2}\subset \cB_{\beta_1}$ with dense and continuous embedding and the following interpolation inequality holds for $\theta\leq \beta\leq \gamma$ and $x\in  \cB_{\gamma}$:
\begin{align}\label{interpolation:ineq}
    |x|^{\gamma-\theta}_\beta \lesssim |x|^{\gamma-\beta}_\theta |x|^{\beta-\theta}_\gamma.
\end{align}
\end{definition}
The main advantage of this approach is that we can view the semigroup $(S(t))_{t\geq 0}$ as a linear mapping between these interpolation spaces and obtain the following standard bounds for the corresponding operator norms. If $S:[0,T]\to \cL(\cB_{\gamma},\cB_{ \gamma+1})$ is such that for every $x\in \cB_{\gamma+1}$ and $t\in(0,T]$ we have that $|(S(t)-\Id)x|_{\gamma} \lesssim t|x|_{\gamma+1}$ and $|S(t)x|_{\gamma+1}\lesssim t^{-1} |x|_{\gamma}$, then for every $\sigma\in[0,1]$ we have that $S(t)\in\cL(\cB_{\gamma+\sigma})$ and 
\begin{align}
|(S(t)-\Id) x|_{\gamma}&\lesssim t^\sigma |x|_{\gamma+\sigma}\label{hg:1}\\
 |S(t)x|_{\gamma+\sigma}&\lesssim t^{-\sigma}|x|_\gamma\label{hg:2}.
\end{align}
For further details regarding these interpolation spaces, see~\cite{Lunardi}. We emphasize that $\alpha\in(\frac{1}{3},\frac{1}{2})$ always indicates the time-regularity of the random input, while $\gamma$ stands for the spatial regularity in $\cB_\gamma$. We work with mild solutions for~\eqref{equation:main} which are given by the variation of constants formula
\begin{align}\label{mild}
    y_t = S(t)y_0 + \int_0^t S(t-s)F(y_s)~\txtd s + \int_0^t S(t-s) G(y_s)~\dx_s.
\end{align}
In order to construct the rough integral $\int_0^t S(t-s)G(y_s)~\dx_s$ and give a proper meaning of the mild formulation~\eqref{mild}, we introduce the following space of controlled rough paths. This incorporates suitable space-time regularity of the solution reflecting the parabolic nature of the problem we consider, similar to~\cite{GHN}.
\begin{definition}\label{def:crp}
We call a pair $(y,y')$ a controlled rough path if
\begin{itemize}
    \item $(y,y')\in C([0,T];\cB_\gamma) \times (C[0,T];\cB_{\gamma-\alpha} ) \cap C^{\alpha}([0,T];\cB_{\gamma-2\alpha} )$. The component $y'$ is referred to as the Gubinelli derivative\footnote{For smooth paths $y$ and $X$, the choice of $y'$ is not unique. However, one can show that for rough inputs $X$, $y'$ is uniquely determined by $y$, see~\cite[Remark~4.7 and Section~6.2]{FrizHairer}.} of $y$.
    \item the remainder  \begin{align}\label{remainder}
    R^y_{s,t}= y_{s,t} -y'_s X_{s,t}    
    \end{align}
     belongs to $ C^{\alpha}([0,T];\cB_{\gamma-\alpha})\cap C^{2\alpha}([0,T];\cB_{\gamma-2\alpha})$.
\end{itemize}
\end{definition}

The space of controlled rough paths is denoted by $D^{2\alpha}_{X,\gamma}$ and endowed with the norm
\begin{align}\label{g:norm}
    \gubnorm{y}{y'}= \left\|y \right\|_{\infty,\cB_\gamma} 
    + \|y' \|_{\infty,\cB_{\gamma-\alpha}}
    + \left\|y'\right\|_{\alpha,\cB_{\gamma-2\alpha}}
    + \left\|R^y \right\|_{\alpha,\cB_{\gamma-\alpha}}+ \left\|R^y \right\|_{2\alpha,\cB_{\gamma-2\alpha}}.
\end{align}
\begin{remark}
\begin{itemize}
    \item [1).] Note that we do not make the H\"older continuity of $y$ as part of the definition of a controlled rough path, since
using~\eqref{remainder} one immediately obtains for $\theta \in \left\{\alpha,2\alpha \right\}$ that
\begin{align}\label{est:hoelder:y}
    \left\|y \right\|_{\alpha,\cB_{\gamma-\theta}}
    \leq \left\|y' \right\|_{\infty,\cB_{\gamma-\theta}} \left\|X\right\|_{\alpha} + \left\|R^y \right\|_{\alpha,\cB_{\gamma-\theta}}.
\end{align}
\item [2).] In order to emphasize the time horizon that we consider we write $D^{2\alpha}_{X,\gamma}([0,T])$ instead of $D^{2\alpha}_{X,\gamma}$.
\end{itemize}
\end{remark}
\begin{remark}
Definition~\ref{def:crp} states that $(y,y')\in D^{2\alpha}_{X,\gamma}$ is controlled by $X$ according to the monotone family of interpolation spaces $(\cB_\gamma)_{\gamma\in\mathbb{R}}$ as in~\cite{GHN}. One can make the semigroup $(S(t))_{t\geq 0}$ part of the definition of the controlled rough path as in~\cite{GHairer}. We work with Definition~\ref{def:crp}, since it incorporates the space-time regularity of the solution and stays closer to the finite-dimensional setting~\cite{FrizHairer, Gubinelli}.
\end{remark}
We state the main assumptions on the coefficients of~\eqref{equation:main} which ensure the global-in-time existence of solutions.
\begin{assumptions}
\end{assumptions}
\begin{itemize}
\item [$(y_0)$] The initial condition $y_0\in\cB_\gamma$.
    \item [(F)]  The nonlinear drift term $F:\cB_\gamma\to \cB_{\gamma-\delta}$ for $\delta\in[0,1)$ is locally Lipschitz continuous with linear growth condition. 
    \item [(G)] Let $\theta\in\{0,\alpha,2\alpha\}$ and $0\leq\sigma<\alpha$. The nonlinear diffusion coefficient $G:\cB_{\gamma-\theta}\to\cB_{\gamma-\theta-\sigma}$ is three times continuously differentiable with bounded derivatives, i.e. $\|D^k G\|_{\cL(\cB^{\otimes k}_{\gamma-\theta},\cB_{\gamma-\theta-\sigma}) } <\infty$ for $k\in\{1,2,3\}$
    and 
    the derivative of
    \begin{align}\label{ass:g}
        DG(\cdot) G(\cdot) \colon \cB_{\gamma-\alpha} \to \cB_{\gamma-2\alpha-\sigma}
    \end{align}
    is bounded.
    \begin{remark}
        \begin{itemize}
        \item [1).] Note that this condition is valid if $G$ itself is bounded or linear.
        \item [2).] Moreover, assumption (G) implies the following Lipschitz property 
        \begin{align}\label{est:g}
            |\big(DG(y^1) - DG(y^2)\big)G(y^1)|_{\cB_{\gamma-2\alpha-\sigma}}\lesssim |y^1-y^2|_{\cB_{\gamma-\alpha}}, \mbox{ for } y^1,y^2\in \cB_{\gamma-\alpha},
        \end{align}
        since
        \begin{align*}
         |\big(DG(y^1) - DG(y^2)\big)G(y^1)|_{\cB_{\gamma-2\alpha-\sigma}}   &\leq  |DG(y^1) G(y^1) - DG(y^2) G(y^2)|_{\cB_{\gamma-2\alpha-\sigma}}\\ 
         &+  | DG(y^2)\big(G(y^1)-G(y^2)\big)|_{\cB_{\gamma-2\alpha-\sigma}}.
        \end{align*}
    \end{itemize}
    \end{remark}
\end{itemize}
\section{Main result}\label{sec:main}

According to~\cite[Theorem 5.1]{GHN} we know that the SPDE~\eqref{equation:main} has a local-in-time solution.
For the sake of completeness we provide two results established in~\cite{GHN} regarding the construction of the rough integral and the existence of the local solution.
The following lemma (\cite[Theorem 4.5]{GHN}) contains the construction of the rough integral.
\begin{lemma}
Let $\bX$ be an $\alpha$-H\"older rough path and let $(y,y') \in D^{2\alpha}_{X,\gamma}$. Then the rough integral 
\begin{align*}
   \int_0^t S(t-r)y_r~\dx_r 
   := \lim_{|\pi| \to 0 } \sum\limits_{[u,v]\in \pi}
   S(t-u) \big[y_u X_{u,v} + y'_u \xx_{u,v} \big],
\end{align*}
exists in $\cB_{\gamma-2\alpha}$, where the limit over partitions $\pi$ of $[0,t]$ is independent of the concrete choice of these partitions.
Furthermore, for all $0\leq \beta< 3\alpha$ the following bound holds true
\begin{align}\label{est:sewing:cor}
\Bigg|\int_s^t S(t-r)y_r~\dx_r - S(t-s)y_s X_{s,t} - S(t-s)y'_s\xx_{s,t}\Bigg|_{\cB_{\gamma-2\alpha+\beta} }
\lesssim \gubnorm{y}{y'} (t-s)^{3\alpha-\beta}.
\end{align}
for all $0\leq s < t \leq T$.
\end{lemma}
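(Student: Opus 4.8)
The plan is to build the integral by a sewing procedure adapted to the $t$-dependent semigroup weight, rather than the classical sewing lemma of \cite{FrizHairer}. Fixing the upper endpoint $t$, I first isolate the germ
\[
\Xi^t_{u,v} := S(t-u)\big[y_u X_{u,v} + y'_u \xx_{u,v}\big] \in \cB_{\gamma-2\alpha},
\]
and analyse how the Riemann sum $\sum_{[u,v]\in\pi}\Xi^t_{u,v}$ over a partition $\pi$ of $[0,t]$ changes when a point $v$ is inserted between $u$ and $w$. The decisive quantity is the second-order defect $\delta\Xi^t_{u,v,w} := \Xi^t_{u,w}-\Xi^t_{u,v}-\Xi^t_{v,w}$ for $u<v<w\le t$. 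Substituting Chen's relation \eqref{chen}, the remainder identity \eqref{remainder} in the form $y_v = y_u + y'_u X_{u,v} + R^y_{u,v}$, and writing $(y')_{u,v}:=y'_v-y'_u$, a direct algebraic simplification should reduce the defect to
\[
\delta\Xi^t_{u,v,w} = \big[S(t-u)-S(t-v)\big]\big[y_u X_{v,w}+y'_u\xx_{v,w}\big] - S(t-v)\big[R^y_{u,v}X_{v,w}+(y')_{u,v}\xx_{v,w}\big].
\]
The first summand is a \emph{semigroup increment}, the second a \emph{pure rough defect}; the point of the manipulation is that the leading (non-differentiable) parts cancel and only these two controllable contributions remain.

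The second step is to estimate both summands in the target norm $|\cdot|_{\gamma-2\alpha+\beta}$. For the pure rough defect I would use $|R^y_{u,v}|_{\gamma-2\alpha}\lesssim\|R^y\|_{2\alpha,\cB_{\gamma-2\alpha}}|v-u|^{2\alpha}$ and $|(y')_{u,v}|_{\gamma-2\alpha}\lesssim\|y'\|_{\alpha,\cB_{\gamma-2\alpha}}|v-u|^{\alpha}$ together with $|X_{v,w}|\lesssim|w-v|^{\alpha}$, $|\xx_{v,w}|\lesssim|w-v|^{2\alpha}$, so the bracket lies in $\cB_{\gamma-2\alpha}$ and is of order $\gubnorm{y}{y'}|w-u|^{3\alpha}$; applying \eqref{hg:2} to $S(t-v):\cB_{\gamma-2\alpha}\to\cB_{\gamma-2\alpha+\beta}$ gains the weight $(t-v)^{-\beta}$. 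For the semigroup increment I would factor $S(t-u)-S(t-v)=S(t-v)\big[S(v-u)-\Id\big]$, note $y_u X_{v,w}\in\cB_{\gamma}$ and $y'_u\xx_{v,w}\in\cB_{\gamma-\alpha}$, and combine \eqref{hg:1} to spend $2\alpha$ (resp.\ $\alpha$) units of regularity on $S(v-u)-\Id$ with \eqref{hg:2} to absorb $S(t-v)$ into $\cB_{\gamma-2\alpha+\beta}$. With this choice the two parts again land at order $\gubnorm{y}{y'}(t-v)^{-\beta}|w-u|^{3\alpha}$, so in total
\[
\big|\delta\Xi^t_{u,v,w}\big|_{\gamma-2\alpha+\beta}\lesssim \gubnorm{y}{y'}\,(t-v)^{-\beta}\,|w-u|^{3\alpha}.
\]

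The third step invokes the singular-weight sewing argument of \cite{GHairer,GHN}. Because the temporal exponent $3\alpha$ exceeds $1$ and $\beta<3\alpha$, the weighted defects are summable along any refining sequence of partitions, the Riemann sums form a Cauchy net in $\cB_{\gamma-2\alpha}$ with a partition-independent limit (this gives existence of the integral), and the telescoped defect bounds the difference between $\int_s^t S(t-r)y_r\,\dx_r$ and the leading germ $S(t-s)y_s X_{s,t}+S(t-s)y'_s\xx_{s,t}$ by $\gubnorm{y}{y'}\,(t-s)^{3\alpha-\beta}$; the boundary singularity $(t-v)^{-\beta}$ integrates against $|w-u|^{3\alpha}$ precisely into the positive power $3\alpha-\beta$, which is \eqref{est:sewing:cor}. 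Since $\Xi^t_{s,t}$ is itself of order $|t-s|^{\alpha}$ in $\cB_{\gamma-2\alpha}$, the integral is a genuine element of $\cB_{\gamma-2\alpha}$.

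The main obstacle is exactly the $t$-dependence of the weight $S(t-u)$: the germ is not a function of the subinterval alone, so the classical sewing lemma does not apply, and measuring the defect in the smoother space $\cB_{\gamma-2\alpha+\beta}$ unavoidably produces the endpoint singularity $(t-v)^{-\beta}$. The delicate bookkeeping is to balance the regularity gained from $S(v-u)-\Id$ against the regularity (and the singular factor) supplied by $S(t-v)$ so that the spatial index closes at $\gamma-2\alpha+\beta$ while the temporal exponent stays at $3\alpha>1$ for every admissible $\beta<3\alpha$; this is what makes the singular sewing applicable and delivers a bound with a strictly positive power of $t-s$.
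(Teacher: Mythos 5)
The paper does not prove this lemma itself; it imports it verbatim from \cite[Theorem 4.5]{GHN}, and your argument reconstructs essentially that proof: a semigroup-weighted germ $\Xi^t_{u,v}=S(t-u)[y_uX_{u,v}+y'_u\xx_{u,v}]$, a defect bound of order $(t-v)^{-\beta}|w-u|^{3\alpha}$, and the singular sewing lemma of \cite{GHairer,GHN}, which applies exactly because $3\alpha>1$ and $\beta<3\alpha$. The overall strategy and the way you close the spatial indices (spending $2\alpha$ resp.\ $\alpha$ units of regularity on $S(v-u)-\Id$ against $y_uX_{v,w}\in\cB_\gamma$ resp.\ $y'_u\xx_{v,w}\in\cB_{\gamma-\alpha}$, then paying $(t-v)^{-\beta}$ for $S(t-v)$) are correct.

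One algebraic slip: your displayed formula for the defect is incomplete. Writing $S(t-u)=S(t-v)S(v-u)$ and using Chen's relation \eqref{chen} together with $y_v-y_u=y'_uX_{u,v}+R^y_{u,v}$, the cross term $y'_uX_{u,v}X_{v,w}$ produced by Chen's relation carries the weight $S(t-u)$, while the one produced by expanding $-S(t-v)(y_v-y_u)X_{v,w}$ carries the weight $S(t-v)$; they no longer cancel exactly, and the correct identity is
\begin{align*}
\delta\Xi^t_{u,v,w} = \big[S(t-u)-S(t-v)\big]\big[y_u X_{v,w}+y'_u\xx_{v,w}+y'_uX_{u,v}X_{v,w}\big] - S(t-v)\big[R^y_{u,v}X_{v,w}+(y'_v-y'_u)\xx_{v,w}\big].
\end{align*}
The extra summand is harmless: $y'_uX_{u,v}X_{v,w}\in\cB_{\gamma-\alpha}$ has norm $\lesssim \|y'\|_{\infty,\cB_{\gamma-\alpha}}|v-u|^{\alpha}|w-v|^{\alpha}$, and spending one more $\alpha$ via \eqref{hg:1} on $S(v-u)-\Id:\cB_{\gamma-\alpha}\to\cB_{\gamma-2\alpha}$ yields again $(t-v)^{-\beta}|w-u|^{3\alpha}$, so your final defect bound and the rest of the argument are unaffected. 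A minor point worth a sentence in a polished write-up: since $3\alpha$ may exceed $1$, you invoke \eqref{hg:2} for exponents $\beta>1$; this follows from the stated case by the semigroup property $S(t)=S(t/2)S(t/2)$.
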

The following theorem (\cite[Theorem 5.1]{GHN}) ensures the existence of a local-in-time solution.
\begin{theorem}\label{thm:local}
Let $T>0$, $F$ and $G$ satisfy the assumptions (F) and (G), $\bX=(X,\xx)$ be an $\alpha$-H\"older rough path  and $y_0 \in \cB_{\gamma}$ with $|y_0|_{\cB_{\gamma}}\leq \rho$. Then there exists  $T^*=T^*(\alpha,\gamma,\rho,X,F,G)\in (0,T]$ such that there exists a unique solution $(y,y') \in D^{2\alpha}_{X,\gamma}([0,T^*])$ up to time $T^*$ satisfying
\begin{align}\label{solution}
     (y_t,y'_t) = \Bigg(S(t)y_0 + \int_0^t S(t-s)F(y_s)~\txtd s + \int_0^t S(t-s)G(y_s)~\dx_s, G(y_t)\Bigg) \in D^{2\alpha}_{X,\gamma},
\end{align}
\end{theorem}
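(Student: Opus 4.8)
The plan is to obtain $(y,y')$ as the unique fixed point of the solution map $\mathcal{M}$ on a closed ball of $D^{2\alpha}_{X,\gamma}([0,T^*])$, for $T^*$ chosen sufficiently small. Concretely, for a controlled rough path $(y,y')$ I would define $\mathcal{M}(y,y') := (z,G(y))$, where
\[
z_t = S(t)y_0 + \int_0^t S(t-s)F(y_s)\,\txtd s + \int_0^t S(t-s)G(y_s)\,\dx_s ,
\]
so that any fixed point automatically satisfies the mild formulation~\eqref{mild} together with the identity $y'=G(y)$ recorded in~\eqref{solution}. The deterministic parts $S(\cdot)y_0$ and the drift integral contribute no linear-in-$X$ increment and hence carry a vanishing Gubinelli derivative, while the rough convolution expands to first order as $S(t-s)G(y_s)X_{s,t}$, which is exactly why the Gubinelli derivative of $z$ is $G(y)$.

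First I would check that $\mathcal{M}$ is well defined, the crucial point being that $(G(y),DG(y)y')$ is itself a controlled rough path. Using the Taylor expansion $G(y_t)-G(y_s)=DG(y_s)(y_t-y_s)+O(|y_t-y_s|^2)$ and inserting $y_t-y_s = y'_sX_{s,t}+R^y_{s,t}$, one identifies the Gubinelli derivative $DG(y)y'$ and the remainder $DG(y_s)R^y_{s,t}+\tfrac12 D^2G(\cdot)(y_t-y_s)^{\otimes 2}$; assumption (G), which bounds $DG,D^2G,D^3G$ and the derivative of $DG(\cdot)G(\cdot)$, then places $(G(y),DG(y)y')\in D^{2\alpha}_{X,\gamma-\sigma}$ so that the rough integral exists by the preceding lemma. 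The resulting composition estimate is only \emph{quadratic} in $\gubnorm{y}{y'}$ because of the $(y_t-y_s)^{\otimes 2}$ term; this is precisely the obstruction that the paper later addresses for global solutions, but for local existence it is harmless once we work inside a bounded ball.

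Next I would establish the a-priori bounds showing $\mathcal{M}$ maps a ball $B_R\subset D^{2\alpha}_{X,\gamma}([0,T^*])$ into itself. The semigroup term is controlled by $|y_0|_{\cB_\gamma}\le\rho$ via~\eqref{hg:1}--\eqref{hg:2}; the drift integral is handled with the smoothing bound~\eqref{hg:2} and the linear growth of $F$, where $\delta<1$ keeps the singularity $(t-s)^{-\delta}$ integrable and yields a factor $(T^*)^{1-\delta}$; the rough convolution is controlled by the sewing estimate~\eqref{est:sewing:cor} applied to $(G(y),DG(y)y')\in D^{2\alpha}_{X,\gamma-\sigma}$. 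For the supremum norm in $\cB_\gamma$ one chooses $\beta=2\alpha+\sigma$ in~\eqref{est:sewing:cor}, which is admissible exactly because $\sigma<\alpha$ gives $\beta<3\alpha$, and produces the positive power $(t-s)^{\alpha-\sigma}$; the leading terms $S(t-s)G(y_s)X_{s,t}$ and $S(t-s)DG(y_s)y'_s\xx_{s,t}$ are bounded by the same power. Collecting these, each increment-, remainder- and H\"older-seminorm in~\eqref{g:norm} carries a strictly positive power of $T^*$, while the supremum norms stay controlled by $\rho$ plus terms small in $T^*$, so for $R$ depending on $\rho$ and $T^*$ small the self-mapping property holds.

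Finally I would prove the contraction estimate. For $(y^1,{y^1}'),(y^2,{y^2}')\in B_R$ I would bound $\gubnorm{z^1-z^2}{G(y^1)-G(y^2)}$ by the same mechanism, now invoking the local Lipschitz continuity of $F$ and the Lipschitz property~\eqref{est:g} of $DG(\cdot)G(\cdot)$ to estimate differences of the composed controlled rough paths; every term again comes with a strictly positive power of $T^*$, so shrinking $T^*$ gives a contraction constant below one on $B_R$, and the Banach fixed point theorem yields existence and uniqueness in $D^{2\alpha}_{X,\gamma}([0,T^*])$. The main obstacle throughout is the bookkeeping of interpolation indices: one must balance the loss $\sigma<\alpha$ incurred by $G$, the order $2\alpha$ of the remainder and the regularization exponent $\beta<3\alpha$ in~\eqref{est:sewing:cor} so that every target space in~\eqref{g:norm} is reached, each singularity at $s\to t$ remains integrable, and each estimate retains a positive power of $T^*$.
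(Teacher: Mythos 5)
The paper does not actually prove this theorem: it is imported verbatim from \cite{GHN} (Theorem~5.1 there), so there is no in-paper argument to compare against. Your Banach fixed-point sketch is the standard route and is, as far as it goes, the argument underlying that reference: define the solution map $(y,y')\mapsto(z,G(y))$ on $D^{2\alpha}_{X,\gamma}([0,T^*])$, use the composition lemma to place $(G(y),DG(y)y')$ in $D^{2\alpha}_{X,\gamma-\sigma}$, recover the lost $\sigma$ through the rough convolution via \eqref{est:sewing:cor} with $\beta=2\alpha+\sigma<3\alpha$, extract positive powers of $T^*$ from every seminorm in \eqref{g:norm}, and close with invariance of a ball of radius comparable to $\rho$ plus a contraction estimate. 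The index bookkeeping you describe ($\sigma<\alpha$, $\delta<1$) is exactly what makes this work.

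Two points deserve sharpening. First, for the contraction step the inequality \eqref{est:g} is not the right tool: it concerns the specific product $DG(\cdot)G(\cdot)$ and is tailored to the solution ansatz $y'=G(y)$ used later for the \emph{global} a-priori bound; for the local fixed point you need Lipschitz difference estimates for the full composition map $(y,y')\mapsto(G(y),DG(y)y')$ between controlled-rough-path norms on a bounded ball, which is where the boundedness of $D^2G$ and $D^3G$ in assumption (G) is actually consumed (differences of remainders involve second-order Taylor terms, and their Lipschitz dependence requires one more derivative). Second, the Banach fixed point theorem gives uniqueness only within the invariant ball $B_R$; uniqueness in all of $D^{2\alpha}_{X,\gamma}([0,T^*])$, as asserted in the statement, needs the standard supplementary argument (any solution lies in some ball on a possibly smaller interval, then one iterates, or one localizes the contraction estimate). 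Neither point is a genuine obstruction, but both must appear in a complete write-up.
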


Using a-priori estimates we show that this solution is global-in-time provided that $F$ and $G$ satisfy the assumptions (F) and (G). We now derive the necessary a-priori estimates starting with the initial data. 
\begin{lemma}\label{lemma:intitial}
Let $y_0 \in \cB_{\gamma}$. Then $(S(\cdot)y_0,0) \in D^{2\alpha}_{X,\gamma}$ and
\begin{align*}
    \gubnorm{S(\cdot)y_0}{0}
    \lesssim \left|y_0 \right|_{\cB_{\gamma}}.
\end{align*}
\end{lemma}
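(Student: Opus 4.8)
The plan is to take the obvious candidate $y_t := S(t)y_0$ with Gubinelli derivative $y'\equiv 0$ and simply evaluate the five seminorms in~\eqref{g:norm}. The choice $y'\equiv 0$ is what makes this tractable: the two terms $\|y'\|_{\infty,\cB_{\gamma-\alpha}}$ and $\|y'\|_{\alpha,\cB_{\gamma-2\alpha}}$ vanish identically, and by~\eqref{remainder} the remainder collapses to the plain increment of $y$, namely
\begin{align*}
R^y_{s,t} = y_t - y_s = S(t)y_0 - S(s)y_0 = (S(t-s)-\Id)S(s)y_0,
\end{align*}
where the last step is the semigroup law. Thus only $\|y\|_{\infty,\cB_\gamma}$ and the two remainder seminorms remain to be controlled, and everything reduces to the smoothing estimates~\eqref{hg:1}--\eqref{hg:2}.

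For the supremum term I would use the boundedness of $(S(t))_{t\geq 0}$ on $\cB_\gamma$, which is~\eqref{hg:2} with $\sigma=0$, to get $|S(t)y_0|_{\cB_\gamma}\lesssim|y_0|_{\cB_\gamma}$ uniformly in $t\in[0,T]$. For the remainder I would estimate the two levels separately, in each case paying for the loss of regularity against the increment factor and absorbing the surplus into $S(s)y_0$. Concretely, applying~\eqref{hg:1} with base regularity $\gamma-\alpha$ and exponent $\sigma=\alpha$ gives
\begin{align*}
|(S(t-s)-\Id)S(s)y_0|_{\cB_{\gamma-\alpha}} \lesssim (t-s)^\alpha|S(s)y_0|_{\cB_\gamma}\lesssim (t-s)^\alpha|y_0|_{\cB_\gamma},
\end{align*}
so that $\|R^y\|_{\alpha,\cB_{\gamma-\alpha}}\lesssim|y_0|_{\cB_\gamma}$; likewise~\eqref{hg:1} with base regularity $\gamma-2\alpha$ and $\sigma=2\alpha$ (admissible since $2\alpha<1$ for $\alpha<\tfrac12$) yields $|R^y_{s,t}|_{\cB_{\gamma-2\alpha}}\lesssim(t-s)^{2\alpha}|y_0|_{\cB_\gamma}$ and hence $\|R^y\|_{2\alpha,\cB_{\gamma-2\alpha}}\lesssim|y_0|_{\cB_\gamma}$. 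Here the one point requiring care is to apply~\eqref{hg:1} at the correct base level so that the extra smoothing is exactly compensated by $S(s)y_0$ remaining bounded in $\cB_\gamma$ (uniformly down to $s=0$, where it equals $y_0$); summing the three contributions then gives $\gubnorm{S(\cdot)y_0}{0}\lesssim|y_0|_{\cB_\gamma}$.

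It remains to verify the membership $(S(\cdot)y_0,0)\in D^{2\alpha}_{X,\gamma}$ of Definition~\ref{def:crp}: strong continuity of the analytic semigroup yields $S(\cdot)y_0\in C([0,T];\cB_\gamma)$, the derivative $y'\equiv 0$ trivially lies in $C([0,T];\cB_{\gamma-\alpha})\cap C^\alpha([0,T];\cB_{\gamma-2\alpha})$, and the two Hölder estimates just derived place $R^y$ in $C^\alpha([0,T];\cB_{\gamma-\alpha})\cap C^{2\alpha}([0,T];\cB_{\gamma-2\alpha})$. I do not anticipate a genuine obstacle: this is a warm-up a-priori bound, and the only bookkeeping subtlety is the correct allocation of regularity exponents in~\eqref{hg:1}, together with checking that the choice $\sigma=2\alpha$ stays in the admissible range $[0,1)$.
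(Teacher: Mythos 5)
Your proposal is correct and follows essentially the same route as the paper: with $y'\equiv 0$ the norm \eqref{g:norm} collapses to the supremum norm plus the two H\"older seminorms of the increment $S(t)y_0-S(s)y_0=(S(t-s)-\Id)S(s)y_0$, which the paper likewise bounds via the $\theta$-H\"older continuity of $S(\cdot)$ from $\cB_\gamma$ to $\cB_{\gamma-\theta}$ (i.e.\ \eqref{hg:1} combined with the uniform boundedness of $S(s)$ on $\cB_\gamma$). Your write-up merely makes explicit the semigroup-law decomposition and the choice of exponents that the paper leaves implicit.
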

\begin{proof}
By \eqref{g:norm} we have
\begin{align*}
    \gubnorm{S(\cdot)y_0}{0} 
    = \left\|S(\cdot)y_0 \right\|_{\infty,\cB_\gamma} 
    + \left\|S(\cdot)y_0 \right\|_{\alpha,\cB_{\gamma-\alpha}}
    + \left\|S(\cdot)y_0 \right\|_{2\alpha,\cB_{\gamma-2\alpha}}.
\end{align*}
Clearly,
\begin{align*}
    \left\|S(\cdot)y_0 \right\|_{\infty,\gamma}
    \lesssim \left|y_0\right|_{\cB_{\gamma}}.
\end{align*}
Further, for $\theta \in \left\{\alpha,2\alpha \right\}$ we obtain
\begin{align*}
    \left\|S(\cdot)y_0 \right\|_{\theta,\gamma-\theta} 
    \lesssim \left\|S(\cdot)\right\|_{\theta,\cL(\cB_{\gamma},\cB_{\gamma-\theta})} \left| y_0\right|_{\cB_{\gamma}}
    \lesssim \left|y_0 \right|_{\cB_{\gamma}}.
\end{align*}
\end{proof}
\begin{lemma}\label{drift}
Let $(y,y')\in D^{2\alpha}_{X,\gamma}$. Then $\Big( \int_0^t S(t-s)F(y_s)~\txtd s, 0 \Big)_{t \in [0,T]}\in D^{2\alpha}_{X,\gamma}$ and satisfies the following bound
\begin{align}\label{est:drift}
    \gubnorm{\int_0^\cdot S(\cdot-s)F(y_s)~\txtd s}{0}
    \lesssim (1+\|y\|_{\infty,\gamma}) T^{1-\delta}.
\end{align}
\end{lemma}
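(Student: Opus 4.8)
The plan is to exploit that the prescribed Gubinelli derivative is zero. Writing $z_t := \int_0^t S(t-s)F(y_s)\,\txtd s$, the remainder~\eqref{remainder} collapses to the plain increment $R^z_{s,t} = z_t - z_s$, so by~\eqref{g:norm} the controlled norm reduces to
\begin{align*}
\gubnorm{z}{0} = \|z\|_{\infty,\cB_\gamma} + \|R^z\|_{\alpha,\cB_{\gamma-\alpha}} + \|R^z\|_{2\alpha,\cB_{\gamma-2\alpha}},
\end{align*}
because the two $y'$-contributions vanish identically. Hence it suffices to produce three estimates: a sup bound in $\cB_\gamma$ and two H\"older bounds of the increment in $\cB_{\gamma-\theta}$ for $\theta\in\{\alpha,2\alpha\}$. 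Continuity of $z$ into $\cB_\gamma$ (needed so that $(z,0)$ genuinely lies in $D^{2\alpha}_{X,\gamma}$) is the standard regularity of a parabolic convolution and I would only remark on it. Throughout, assumption (F) enters solely through the linear growth bound $|F(y_s)|_{\cB_{\gamma-\delta}} \lesssim 1 + \|y\|_{\infty,\gamma}$, which is exactly what keeps every estimate linear in $\|y\|_{\infty,\gamma}$.

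For the sup bound I would use the smoothing estimate~\eqref{hg:2} with $\sigma=\delta$ to lift $F(y_s)\in\cB_{\gamma-\delta}$ back to $\cB_\gamma$, giving $|S(t-s)F(y_s)|_{\cB_\gamma} \lesssim (t-s)^{-\delta}(1+\|y\|_{\infty,\gamma})$, and then integrate; since $\delta<1$ the singularity is integrable and $\int_0^t (t-s)^{-\delta}\,\txtd s \lesssim T^{1-\delta}$, yielding the claimed bound.

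For the increments I would split, using the semigroup law $S(t-r) = (S(t-s)-\Id)S(s-r) + S(s-r)$, as
\begin{align*}
z_t - z_s = \int_s^t S(t-r)F(y_r)\,\txtd r + \int_0^s (S(t-s)-\Id)S(s-r)F(y_r)\,\txtd r.
\end{align*}
In the second integral the factor $S(t-s)-\Id$ is the source of the H\"older exponent: by~\eqref{hg:1} with $\sigma=\theta$ it costs $(t-s)^\theta$ at the price of measuring $S(s-r)F(y_r)$ in $\cB_\gamma$, which via~\eqref{hg:2} contributes $(s-r)^{-\delta}$; integrating over $r\in[0,s]$ (again using $\delta<1$) gives $(t-s)^\theta\,T^{1-\delta}(1+\|y\|_{\infty,\gamma})$. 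In the first integral I would instead bring $F(y_r)$ into $\cB_{\gamma-\theta}$ directly with~\eqref{hg:2}, producing a factor $(t-r)^{-(\delta-\theta)_+}$ and, after integration, the power $(t-s)^{1-(\delta-\theta)_+}$; dividing by $(t-s)^\theta$ leaves the positive power $1-\max(\delta,\theta)$ of $t-s$, bounded on compact time intervals and subsumed by $T^{1-\delta}$. Collecting the two pieces and dividing by $(t-s)^\theta$ gives the $\theta$-H\"older bound for $\theta\in\{\alpha,2\alpha\}$.

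The main obstacle is purely the bookkeeping of the time singularities: one must split the increment precisely so that the full H\"older weight $(t-s)^\theta$ is extracted from the semigroup difference $S(t-s)-\Id$ rather than from the length of the integration interval, and one must check that every resulting exponent ($-\delta$, $-(\delta-\theta)_+$, and the regularity gains in~\eqref{hg:1} and~\eqref{hg:2}) stays strictly below $1$ so that all integrals converge; both are guaranteed by $\delta<1$ and $\theta\le 2\alpha<1$. No sewing lemma or Taylor expansion of a nonlinearity is involved here, which is precisely why this term carries no quadratic contribution.
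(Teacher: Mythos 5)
Your proposal is correct and follows essentially the same route as the paper: the same reduction of the controlled norm to the sup bound plus two H\"older bounds (since $y'=0$), the same smoothing estimate $|S(t-s)F(y_s)|_{\cB_\gamma}\lesssim (t-s)^{-\delta}|F(y_s)|_{\cB_{\gamma-\delta}}$ for the sup norm, and the identical decomposition of the increment into $(S(t-s)-\Id)\int_0^s S(s-r)F(y_r)\,\txtd r$ plus $\int_s^t S(t-r)F(y_r)\,\txtd r$, with the exponents $(t-s)^\theta$ and $(t-s)^{1+(\theta-\delta)\wedge 0}$ matching the paper's computation exactly. Nothing further is needed.
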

\begin{proof} Since the Gubinelli derivative of the deterministic integral is zero, we compute
\begin{align*}
    \gubnorm{\int_0^\cdot S(\cdot-s)F(y_s)~\txtd s}{0}
   & = \left\|\int_0^\cdot S(\cdot-s)F(y_s)~\txtd s\right\|_{\infty,\cB_\gamma} 
    + \left\| \int_0^\cdot S(\cdot-s)F(y_s)~\txtd s \right\|_{\alpha,\cB_{\gamma-\alpha}}\\
    &+ \left\| \int_0^\cdot S(\cdot-s)F(y_s)~\txtd s \right\|_{2\alpha,\cB_{\gamma-2\alpha}}  
\end{align*}
We begin with the first term and get due to the fact that $F:\cB_\gamma\to\cB_{\gamma-\delta}$ the estimate
\begin{align*}
    \left|\int_0^t S(t-s)F(y_s)~\txtd s \right|_{\cB_\gamma}
    \lesssim \int_0^t (t-s)^{-\delta} \left|F(y_s) \right|_{\cB_{\gamma-\delta}}~\txtd s \lesssim T^{1-\delta} (1+\|y\|_{\infty,\gamma})
\end{align*}
For the H\"older norms we use
\begin{align*}
\int_0^t S(t-r)F(y_r)~\txtd r- \int_0^s S(s-r)F(y_r)~\txtd r = (S(t-s)-\Id) \int_0^s S(s-r)F(y_r)~\txtd r + \int_s^t S(t-r)F(y_r)~\txtd r
\end{align*}
to obtain for all $\theta\in\{\alpha,2\alpha\}$
\begin{align*}
    \left|\int_s^t S(t-r)F(y_r)~\txtd r \right|_{\cB_{\gamma-\theta}} 
    \lesssim \int_s^t (t-r)^{(\theta-\delta)\wedge 0}\left|F(y_r)\right|_{\cB_{\gamma-\delta}}~\txtd r
    \lesssim (t-s)^{1+(\theta-\delta)\wedge 0} (1+\|y\|_{\infty,\gamma})
\end{align*}
as well as 
\begin{align*}
    \left|(S(t-s)-\Id) \int_0^s S(s-r)F(y_r)~\txtd r \right|_{\cB_{\gamma-\theta}}
    \lesssim (t-s)^{\theta} \left|\int_0^s S(s-r)F(y_r)~\txtd r \right|_{\cB_{\gamma}}
    \lesssim (t-s)^{\theta} T^{1-\delta} (1+\|y\|_{\infty,\gamma}).
\end{align*}
Thus, 
\begin{align*}
    \left\| \int_0^\cdot S(\cdot-s)F(y_s)~\txtd s \right\|_{\theta,\cB_{\gamma-\theta}} 
    \lesssim T^{1-\delta} (1+\|y\|_{\infty,\gamma}).
\end{align*}
Putting these estimates together proves~\eqref{est:drift}.
\end{proof}
We focus on the rough integral, see~\cite{GubinelliTindel, GHairer,GHN} for similar results. Here we show that the rough convolution increases the spatial regularity by $\sigma$ providing an estimate for the norm~\eqref{g:norm} of the controlled rough integral using the interpolation inequality~\eqref{est:hoelder:y}.
\begin{lemma}\label{rough:integral}
Let $(y,y')\in D^{2\alpha}_{X,\gamma}$. Then for all $0\leq \sigma < \alpha$
\begin{align}
(z,z')=\Big( \int_0^\cdot S(\cdot-s)y_s~\dx_s, y \Big) \in D^{2\alpha}_{X,\gamma+\sigma}
\end{align}
and the following estimate holds true
\begin{align}\label{est:roughintegral:y}
\gubnormpar{z}{z'}{\gamma+\sigma} \lesssim |y_0|_{\cB_\gamma} + |y'_0|_{\cB_{\gamma-\alpha}} + T^{\alpha-\sigma} \gubnorm{y}{y'}.
\end{align}
\end{lemma}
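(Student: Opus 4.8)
The plan is to read off the structure of $z_t=\int_0^t S(t-r)y_r~\dx_r$ (with $z'=y$) from the local expansion supplied by the integral lemma \eqref{est:sewing:cor}, and then to control the five seminorms of $\gubnormpar{z}{z'}{\gamma+\sigma}$ by combining that expansion with the semigroup smoothing bounds \eqref{hg:1}--\eqref{hg:2}. Using additivity of the rough integral and the semigroup property one first writes $z_t-z_s=(S(t-s)-\Id)z_s+\int_s^t S(t-r)y_r~\dx_r$, and inserting the expansion from \eqref{est:sewing:cor} this gives
\begin{align*}
R^z_{s,t}=z_{s,t}-y_s X_{s,t}=(S(t-s)-\Id)z_s+(S(t-s)-\Id)y_s X_{s,t}+S(t-s)y'_s\xx_{s,t}+Q_{s,t},
\end{align*}
where $|Q_{s,t}|_{\cB_{\gamma-2\alpha+\beta}}\lesssim\gubnorm{y}{y'}(t-s)^{3\alpha-\beta}$ for every $0\leq\beta<3\alpha$. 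Everything then reduces to choosing $\beta$ appropriately and balancing the singular smoothing factors against the H\"older factors coming from $X$ and $\xx$.

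First I would bound $\|z\|_{\infty,\cB_{\gamma+\sigma}}$. Taking $s=0$ in the expansion and choosing $\beta=2\alpha+\sigma$ — admissible precisely because $\sigma<\alpha$ forces $\beta<3\alpha$ — places $Q_{0,t}$ in $\cB_{\gamma+\sigma}$ with a factor $t^{\alpha-\sigma}$. The two leading terms $S(t)y_0X_{0,t}$ and $S(t)y'_0\xx_{0,t}$ are handled by \eqref{hg:2}: lifting $y_0\in\cB_\gamma$ to $\cB_{\gamma+\sigma}$ costs $t^{-\sigma}$ and lifting $y'_0\in\cB_{\gamma-\alpha}$ to $\cB_{\gamma+\sigma}$ costs $t^{-(\alpha+\sigma)}$, and these singularities are absorbed by $|X_{0,t}|\lesssim t^\alpha$ and $|\xx_{0,t}|\lesssim t^{2\alpha}$, leaving the net positive power $t^{\alpha-\sigma}$. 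This yields $\|z\|_{\infty,\cB_{\gamma+\sigma}}\lesssim|y_0|_{\cB_\gamma}+|y'_0|_{\cB_{\gamma-\alpha}}+T^{\alpha-\sigma}\gubnorm{y}{y'}$, and it is exactly this step that produces the regularity gain by $\sigma$.

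Next I would estimate the remainder seminorms $\|R^z\|_{\alpha,\cB_{\gamma+\sigma-\alpha}}$ and $\|R^z\|_{2\alpha,\cB_{\gamma+\sigma-2\alpha}}$ term by term from the displayed decomposition. The term $(S(t-s)-\Id)z_s$ is treated by \eqref{hg:1}, converting the already established $\cB_{\gamma+\sigma}$-bound on $z_s$ into the exact target power of $(t-s)$; the term $(S(t-s)-\Id)y_sX_{s,t}$ combines \eqref{hg:1} on $y_s\in\cB_\gamma$ with $|X_{s,t}|\lesssim(t-s)^\alpha$; the term $S(t-s)y'_s\xx_{s,t}$ uses boundedness of $S$ on $\cB_{\gamma-\alpha}\hookrightarrow\cB_{\gamma+\sigma-2\alpha}$ (resp. \eqref{hg:2}) together with $|\xx_{s,t}|\lesssim(t-s)^{2\alpha}$; and $Q_{s,t}$ is drawn from \eqref{est:sewing:cor} with $\beta=\alpha+\sigma$, resp. $\beta=\sigma$. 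Apart from the $(S(t-s)-\Id)z_s$ contributions (which reproduce the bound on $\|z\|_{\infty,\cB_{\gamma+\sigma}}$) and the L\'evy-area contribution in the $2\alpha$-seminorm (which reproduces $\|y'\|_{\infty,\cB_{\gamma-\alpha}}$), every remaining contribution carries a surplus power $(t-s)^{\alpha-\sigma}\leq T^{\alpha-\sigma}$. Finally the two Gubinelli-derivative seminorms $\|z'\|_{\infty,\cB_{\gamma+\sigma-\alpha}}$ and $\|z'\|_{\alpha,\cB_{\gamma+\sigma-2\alpha}}$, with $z'=y$, reduce by the embeddings $\cB_\gamma\hookrightarrow\cB_{\gamma+\sigma-\alpha}$ and $\cB_{\gamma-\alpha}\hookrightarrow\cB_{\gamma+\sigma-2\alpha}$ (both valid since $\sigma<\alpha$), together with \eqref{est:hoelder:y}, to $\|y\|_{\infty,\cB_\gamma}$ and $\|y\|_{\alpha,\cB_{\gamma-\alpha}}$, hence to $\gubnorm{y}{y'}$. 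Collecting the bounds gives \eqref{est:roughintegral:y}, the lower-order terms that do not themselves produce the factor $T^{\alpha-\sigma}$ being absorbed into the stated estimate through the convention that constants may depend on $T$ uniformly on compact intervals.

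I expect the main obstacle to be the simultaneous control of the regularity gain and the time singularity: the smoothing estimate $|S(t)x|_{\gamma+\sigma}\lesssim t^{-\sigma}|x|_{\gamma}$ blows up at $t=0$, and the whole argument rests on the observation that in each term this blow-up is dominated by the H\"older factors associated with $X$ and $\xx$, the balance being available exactly when $\sigma<\alpha$ (equivalently when the exponent $\beta$ in \eqref{est:sewing:cor} can be kept below $3\alpha$). Keeping careful track of which contributions genuinely acquire the small factor $T^{\alpha-\sigma}$ and which only reproduce $\gubnorm{y}{y'}$ or the initial data $|y_0|_{\cB_\gamma}+|y'_0|_{\cB_{\gamma-\alpha}}$ is the principal bookkeeping difficulty.
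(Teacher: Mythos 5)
Your overall architecture --- the decomposition $R^z_{s,t}=(S(t-s)-\Id)z_s+(S(t-s)-\Id)y_sX_{s,t}+S(t-s)y'_s\xx_{s,t}+Q_{s,t}$, the choice of $\beta$ in \eqref{est:sewing:cor} for each seminorm, and the balancing of the smoothing singularities $t^{-\sigma}$, $t^{-(\alpha+\sigma)}$ against $|X_{0,t}|\lesssim t^{\alpha}$, $|\xx_{0,t}|\lesssim t^{2\alpha}$ --- is exactly the paper's, and those parts are correct. The gap lies in the contributions you concede do not produce the factor $T^{\alpha-\sigma}$: the two Gubinelli-derivative seminorms $\left\|z'\right\|_{\infty,\cB_{\gamma+\sigma-\alpha}}$ and $\left\|z'\right\|_{\alpha,\cB_{\gamma+\sigma-2\alpha}}$, and the L\'evy-area term $S(t-s)y'_s\xx_{s,t}$ in the $2\alpha$-remainder. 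Bounding these by the plain embeddings $\cB_\gamma\hookrightarrow\cB_{\gamma+\sigma-\alpha}$ and $\cB_{\gamma-\alpha}\hookrightarrow\cB_{\gamma+\sigma-2\alpha}$ leaves terms of the form $C\gubnorm{y}{y'}$ with a constant that does \emph{not} vanish as $T\to 0$, and these cannot be ``absorbed'' by the convention that constants may depend on $T$ on compact intervals: the content of \eqref{est:roughintegral:y} is precisely that the coefficient of $\gubnorm{y}{y'}$ is $O(T^{\alpha-\sigma})$, which is what Corollary~\ref{cor:gubnorm:bound} and Lemma~\ref{lemma:apriori} exploit to absorb that term for small $T$ and close the a-priori bound. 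With your weaker version the subsequent absorption argument would fail.

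The missing ingredient is the interpolation inequality \eqref{interpolation:ineq} of the monotone scale, which trades spatial for temporal regularity. For instance,
\begin{align*}
\left|y_t-y_s\right|_{\cB_{\gamma+\sigma-\alpha}}\lesssim \left|y_t-y_s\right|_{\cB_{\gamma}}^{\frac{\sigma}{\alpha}}\,\left|y_t-y_s\right|_{\cB_{\gamma-\alpha}}^{\frac{\alpha-\sigma}{\alpha}}
\end{align*}
gives $\left\|y\right\|_{\alpha-\sigma,\gamma+\sigma-\alpha}\lesssim\left\|y\right\|_{\infty,\gamma}^{\frac{\sigma}{\alpha}}\left\|y\right\|_{\alpha,\gamma-\alpha}^{\frac{\alpha-\sigma}{\alpha}}\lesssim\gubnorm{y}{y'}$, whence
\begin{align*}
\left\|z'\right\|_{\infty,\gamma+\sigma-\alpha}=\left\|y\right\|_{\infty,\gamma+\sigma-\alpha}\leq \left|y_0\right|_{\cB_{\gamma+\sigma-\alpha}}+T^{\alpha-\sigma}\left\|y\right\|_{\alpha-\sigma,\gamma+\sigma-\alpha}\lesssim \left|y_0\right|_{\cB_{\gamma}}+T^{\alpha-\sigma}\gubnorm{y}{y'}.
\end{align*}
This is also where the initial-data terms $\left|y_0\right|_{\cB_\gamma}+\left|y'_0\right|_{\cB_{\gamma-\alpha}}$ on the right-hand side of \eqref{est:roughintegral:y} actually originate. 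The same device applied to $y'$ (interpolating between $\left\|y'\right\|_{\infty,\gamma-\alpha}$ and $\left\|y'\right\|_{\alpha,\gamma-2\alpha}$) and to $R^y$ handles $\left\|z'\right\|_{\alpha,\gamma+\sigma-2\alpha}$ through \eqref{est:hoelder:y}, and the resulting bound on $\left\|y'\right\|_{\infty,\gamma+\sigma-2\alpha}$ is what must be fed into the L\'evy-area term of the remainder in place of $\left\|y'\right\|_{\infty,\gamma-\alpha}$. Once these three places are repaired with interpolation, your argument coincides with the paper's proof.
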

\begin{proof}
By the definition of the norm~\eqref{g:norm} and regarding that $z'=y$ we have 
\begin{align}\label{norm:z}
    \gubnormpar{z}{z'}{\gamma+\sigma}
    = \left\|z \right\|_{\infty,\cB_{\gamma+\sigma}} 
    + \left\|y \right\|_{\infty,\cB_{\gamma+\sigma-\alpha}}
    + \left\|y \right\|_{\alpha,\cB_{\gamma+\sigma-2\alpha}}
    + \left\|R^z \right\|_{\alpha,\cB_{\gamma+\sigma-\alpha}}
    + \left\|R^z \right\|_{2\alpha,\cB_{\gamma+\sigma-2\alpha}}.
\end{align}
By \eqref{est:hoelder:y} we know that $y \in C^\alpha([0,T];\cB_{\gamma-\alpha})$.
Using the interpolation inequality~\eqref{interpolation:ineq} for the scale of Banach spaces $\cB_\gamma$ we derive 
\begin{align*}
    \left| y_t-y_s \right|_{\cB_{\gamma+\sigma-\alpha}}
    \lesssim \left| y_t-y_s \right|_{\cB_{\gamma}}^{\frac{\sigma}{\alpha}} \left| y_t-y_s \right|_{\cB_{\gamma-\alpha}}^{\frac{\alpha-\sigma}{\alpha}}.
\end{align*}
Consequently, this leads to
\begin{align*}
\left\|y \right\|_{\alpha-\sigma,\gamma+\sigma-\alpha}
\lesssim \left\|y \right\|_{\infty,\gamma}^{\frac{\sigma}{\alpha}} 
 \left\|y \right\|_{\alpha,\gamma-\alpha}^{\frac{\alpha-\sigma}{\alpha}}
\end{align*}
Hence, for the second term in \eqref{norm:z} we obtain
\begin{align}\label{est:y:infty}
    \left\|y \right\|_{\infty,\gamma+\sigma-\alpha}
    \leq |y_0|_{\cB_{\gamma+\sigma-\alpha}} 
    + T^{\alpha-\sigma}\left\|y \right\|_{\alpha-\sigma,\gamma+\sigma-\alpha}
    \lesssim |y_0|_{\cB_{\gamma}} 
    + T^{\alpha-\sigma} \gubnorm{y}{y'}.
\end{align}
Similarly, for the third term of \eqref{norm:z} we apply \eqref{est:hoelder:y}
\begin{align*}
 \left\|y \right\|_{\alpha,\gamma+\sigma-2\alpha}
    \leq \left\|y' \right\|_{\infty,\gamma+\sigma-2\alpha} \left\|X\right\|_{\alpha} + \left\|R^y \right\|_{\alpha,\gamma+\sigma-2\alpha},
\end{align*}
where
\begin{align}
    \left\|y' \right\|_{\infty,\gamma+\sigma-2\alpha}
    &\leq |y'_0|_{\cB_{\gamma+\sigma-2\alpha}} + T^{\alpha-\sigma} \left\|y' \right\|_{\alpha-\sigma,\gamma+\sigma-2\alpha} \notag\\
    &\lesssim |y'_0|_{\cB_{\gamma-\alpha}} 
    + T^{\alpha-\sigma}\left\|y' \right\|_{\infty,\gamma-\alpha}^{\frac{\sigma}{\alpha}}
    \left\|y' \right\|_{\alpha,\gamma-2\alpha}^{\frac{\alpha-\sigma}{\alpha}}, \label{est:hoelder:yprime}
\end{align}
and 
\begin{align*}
\left\|R^y \right\|_{\alpha,\gamma+\sigma-2\alpha}
\leq \left\|R^y \right\|_{\alpha,\gamma-\alpha}^{\frac{\sigma}{\alpha}}
\left\|R^y \right\|_{2\alpha,\gamma-2\alpha}^{\frac{\alpha-\sigma}{\alpha}} T^{\alpha-\sigma}.
\end{align*}
Thus,
\begin{align*}
     \left\|y \right\|_{\alpha,\gamma+\sigma-2\alpha} 
     \lesssim |y'_0|_{\cB_{\gamma-\alpha}} + T^\alpha \gubnorm{y}{y'}. 
\end{align*}
For the first term of \eqref{norm:z} we use
\begin{align*}
    z_t &= \int_0^t S(t-r)y_r~\dx_r \\&= \int_0^t S(t-r)y_r~\dx_r - S(t) y_0 X_{0,t} - S(t) y'_0 \xx_{0,t} \\
    &+ S(t) y_0 X_{0,t} + S(t) y'_0 \xx_{0,t}.
\end{align*}
For the first term we apply \eqref{est:sewing:cor}
\begin{align*}
    \left|\int_0^t S(t-r)y_r~\dx_r - S(t) y_0 X_{0,t} - S(t) y'_0 \xx_{0,t} \right|_{\cB_{\gamma+\sigma}}
    \lesssim \gubnorm{y}{y'} t^{\alpha-\sigma}.
\end{align*}
For the second term one sees
\begin{align*}
    |S(t) y_0 X_{0,t}|_{\cB_{\gamma+\sigma}} 
    \lesssim t^\alpha |S(t)|_{\cL(\cB_{\gamma},\cB_{\gamma+\sigma})} 
    |y_0|_{\cB_{\gamma}}
    \lesssim t^{\alpha-\sigma} |y_0|_{\cB_{\gamma}} 
\end{align*}
Analogously 
\begin{align*}
    |S(t) y'_0\xx_{0,t} |_{\cB_{\gamma+\sigma}} \lesssim t^{2\alpha} |S(t)|_{\cL(\cB_{\gamma-\alpha},\cB_{\gamma+\sigma})}  \|\xx\|_{2\alpha} |y'_0|_{\cB_{\gamma-\alpha}} \lesssim t^{\alpha-\sigma} |y'_0|_{\cB_{\gamma-\alpha}}.
\end{align*}
In conclusion we can bound $\|z\|_{\infty,\cB_{\gamma+\sigma}}$ as 
\begin{align}\label{est:infty:z}
\|z\|_{\infty,\cB_{\gamma+\sigma}} \lesssim T^{\alpha-\sigma} \gubnorm{y}{y'}. 
\end{align}
For the remainder terms we use 
\begin{align*}
    R^z_{s,t}&= \int_0^t S(t-r)y_r~\dx_r - \int_0^s S(s-r)y_r~\dx_r - y_s X_{s,t} \\
    &= \int_s^t S(t-r)y_r~\dx_r - S(t-s)y_s X_{s,t} - S(t-s)y'_s \xx_{s,t} \\
    &+ (S(t-s)-\Id) y_s X_{s,t}
    + (S(t-s)-\Id) \int_0^{s} S(s-r)y_r~\dx_r
    +S(t-s) y'_s \xx_{s,t}.
\end{align*}
Throughout the following computations we set $\theta\in\{\alpha,2\alpha\}$.
The first term can be estimated using~\eqref{est:sewing:cor} 
\begin{align*}
    \left|\int_s^t S(t-r)y_r~\dx_r - S(t-s)y_s X_{s,t} - S(t-s)y'_s \xx_{s,t} \right|_{\cB_{\gamma+\sigma-\theta}}
    \lesssim \gubnorm{y}{y'} (t-s)^{\alpha-\sigma+\theta}.
\end{align*}
Furthermore we obtain for the second term 
\begin{align*}
    \left|(S(t-s)-\Id) y_s X_{s,t} \right|_{\cB_{\gamma+\sigma-\theta}} &\lesssim (t-s)^\alpha \|X\|_{\alpha}  | S(t-s)-\Id|_{\cL(\cB_{\gamma+\sigma-\alpha},\cB_{\gamma+\sigma-\theta})} |y_s|_{\cB_{\gamma+\sigma-\alpha}}  \\
    &\lesssim (t-s)^\theta \left|y_s \right|_{\cB_{\gamma+\sigma-\alpha}}\\
    & \lesssim (t-s)^\theta \|y\|_{\infty,\cB_{\gamma+\sigma-\alpha}}
\end{align*}
which was estimated in~\eqref{est:y:infty}.\\
For the second term we get
\begin{align*}
    \left|(S(t-s)-\Id)\int_0^{s} S(s-r)y_r~\dx_r\right|_{\cB_{\gamma+\sigma-\theta}}&\lesssim |S(t-s)-\Id|_{\cL(\cB_{\gamma+\sigma},\cB_{\gamma+\sigma-\theta})} \left| \int_0^s S(s-r)y_r~\dx_r \right|_{\cB_{\gamma+\sigma}} \\ 
    &\lesssim (t-s)^{\theta} |z_s|_{\cB{\gamma+\sigma}}
    \lesssim (t-s)^{\theta} \|z\|_{\infty,\gamma+\sigma},
\end{align*}
which was estimated in \eqref{est:infty:z}. \\
Finally, the last term can be estimated by 
\begin{align*}
    |S(t-s)y'_s \xx_{s,t}|_{\cB_{\gamma+\sigma-\theta}}
    &\lesssim (t-s)^{2\alpha} \|\xx \|_{2\alpha}
    |S(t-s)|_{\cL({\cB_{\gamma+\sigma-2\alpha},\cB_{\gamma+\sigma-\theta}})}
    |y'_s|_{\cB_{\gamma+\sigma-2\alpha}} \\
    &\lesssim (t-s)^{\theta} |y'_s|_{\cB_{\gamma+\sigma-2\alpha}} \\
    &\lesssim (t-s)^{\theta} \left\|y' \right\|_{\infty,\cB_{\gamma+\sigma-2\alpha}},
\end{align*}
which again was estimated in \eqref{est:hoelder:yprime}.\\
Summarizing we obtain the bounds 
\begin{align*}
    \left\|R^z \right\|_{\theta,\gamma+\sigma-\theta}
    \lesssim |y_0|_{\cB_{\gamma}} + |y'_0|_{\cB_{\gamma-\alpha}} + T^{\alpha-\sigma} \gubnorm{y}{y'}. 
\end{align*}
Putting all these estimates together in~\eqref{norm:z} proves the statement.
\end{proof}

The following result provides an estimate on the composition of a controlled rough path with a smooth function $G$ satisfying assumption (G). In order to avoid quadratic terms as in~\cite[Lemma 4.7]{GHN} we directly use the structure of the solution as specified in~\eqref{solution}.
\begin{lemma}\label{smooth:fuction}
Let $G$ satisfy assumption (G) and $(y,G(y))\in D^{2\alpha}_{X,\gamma}$. Then $(G(y),DG(y) G(y))\in D^{2\alpha}_{X,\gamma-\sigma}$ and the following bound is valid
\begin{align*}
    \gubnormpar{G(y)}{DG(y)G(y)}{\gamma-\sigma}
    \lesssim 1+\gubnorm{y}{y'}.
\end{align*}
\end{lemma}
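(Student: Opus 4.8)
The plan is to bound the five contributions to $\gubnormpar{G(y)}{DG(y)G(y)}{\gamma-\sigma}$ prescribed by \eqref{g:norm} (with $\gamma$ replaced by $\gamma-\sigma$) one at a time, arranging every estimate so that $\gubnorm{y}{y'}$ enters only to the first power. I would dispatch the three non-remainder terms first. For $\|G(y)\|_{\infty,\cB_{\gamma-\sigma}}$ I use that boundedness of $DG$ forces the linear growth $|G(x)|_{\cB_{\gamma-\sigma}}\lesssim 1+|x|_{\cB_{\gamma}}$ (the constant $|G(0)|_{\cB_{\gamma-\sigma}}$ being absorbed into the universal constant), so this term is $\lesssim 1+\|y\|_{\infty,\cB_\gamma}$. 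For the Gubinelli derivative $z'=DG(y)G(y)$, I bound $\|z'\|_{\infty,\cB_{\gamma-\sigma-\alpha}}$ using boundedness of $DG\in\cL(\cB_{\gamma-\alpha},\cB_{\gamma-\alpha-\sigma})$ (case $\theta=\alpha$) together with the embedding $\cB_{\gamma-\sigma}\hookrightarrow\cB_{\gamma-\alpha}$, which again yields $\lesssim 1+\|y\|_{\infty,\cB_\gamma}$. The seminorm $\|z'\|_{\alpha,\cB_{\gamma-\sigma-2\alpha}}$ is exactly where the extra hypothesis on $DG(\cdot)G(\cdot)$ enters: boundedness of its derivative gives the Lipschitz bound $|DG(y_t)G(y_t)-DG(y_s)G(y_s)|_{\cB_{\gamma-2\alpha-\sigma}}\lesssim|y_t-y_s|_{\cB_{\gamma-\alpha}}$, and $\|y\|_{\alpha,\cB_{\gamma-\alpha}}\lesssim\gubnorm{y}{y'}$ via \eqref{est:hoelder:y} closes this term linearly.

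The heart of the argument is the remainder $R^{G(y)}_{s,t}=G(y_t)-G(y_s)-DG(y_s)G(y_s)X_{s,t}$. Here I would deliberately avoid a second-order Taylor expansion, since the quadratic term $|y_t-y_s|^2$ it produces is precisely the source of the obstruction in \cite[Lemma 4.7]{GHN}. Instead I keep a first-order integral form of $G$ and insert the controlled structure $y_t-y_s=G(y_s)X_{s,t}+R^y_{s,t}$ coming from $y'=G(y)$ and \eqref{remainder}. Writing $G(y_t)-G(y_s)=\int_0^1 DG(y_s+\tau(y_t-y_s))\,\txtd\tau\,(y_t-y_s)$ and substituting, the term $DG(y_s)G(y_s)X_{s,t}$ cancels and one is left with
\[
R^{G(y)}_{s,t}=X_{s,t}\int_0^1\big(DG(y_s+\tau(y_t-y_s))-DG(y_s)\big)G(y_s)\,\txtd\tau+\int_0^1 DG(y_s+\tau(y_t-y_s))\,\txtd\tau\,R^y_{s,t}.
\]
The decisive structural feature is that in each piece at most one factor carries a norm of $y$, the other being either the rough input $X_{s,t}$ or a structurally bounded operator; hence no product of two seminorms of $y$ can arise.

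It then remains to estimate these two pieces in the $\alpha$-norm on $\cB_{\gamma-\sigma-\alpha}$ and the $2\alpha$-norm on $\cB_{\gamma-\sigma-2\alpha}$. For the second piece I apply boundedness of $DG$ (cases $\theta=\alpha$ and $\theta=2\alpha$) to obtain $\lesssim|R^y_{s,t}|_{\cB_{\gamma-\alpha}}$ and $\lesssim|R^y_{s,t}|_{\cB_{\gamma-2\alpha}}$, which after dividing by the appropriate power of $|t-s|$ are controlled by $\|R^y\|_{\alpha,\cB_{\gamma-\alpha}}$ and $\|R^y\|_{2\alpha,\cB_{\gamma-2\alpha}}$. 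For the first piece, in the $\alpha$-norm I simply bound the bracketed difference termwise by boundedness of $DG$ and linear growth of $G$, letting the factor $|X_{s,t}|\lesssim|t-s|^\alpha$ supply the required Hölder power; this is linear and needs no cancellation. In the $2\alpha$-norm, however, $|X_{s,t}|$ only gives $|t-s|^\alpha$, so I must extract a further $|t-s|^\alpha$ from the difference $\big(DG(y_s+\tau(y_t-y_s))-DG(y_s)\big)G(y_s)$; this is exactly what \eqref{est:g} provides, bounding it linearly by $|y_t-y_s|_{\cB_{\gamma-\alpha}}\lesssim\|y\|_{\alpha,\cB_{\gamma-\alpha}}|t-s|^\alpha$. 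Collecting the five bounds and using $\|y\|_{\alpha,\cB_{\gamma-\alpha}}\lesssim\gubnorm{y}{y'}$ yields the claimed linear estimate. The main obstacle is this very last point: producing the extra Hölder regularity for the $2\alpha$-remainder norm without squaring the norm, which is precisely why assumption (G) is tailored so that $DG(\cdot)G(\cdot)$ — rather than $G$ or $DG$ individually — is Lipschitz into $\cB_{\gamma-2\alpha-\sigma}$.
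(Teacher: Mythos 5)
Your proposal is correct and follows essentially the same route as the paper: the same termwise treatment of the non-remainder contributions, the same first-order integral representation of $G(y_t)-G(y_s)$ combined with the substitution $y_{s,t}=G(y_s)X_{s,t}+R^y_{s,t}$ to split $R^{G(y)}$ into two linear pieces, and the same use of \eqref{est:g} to extract the extra $|t-s|^\alpha$ in the $2\alpha$-remainder norm. No changes needed.
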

\begin{proof}
By \eqref{g:norm}
\begin{align*}
    \gubnormpar{G(y)}{DG(y)G(y)}{\gamma-\sigma}
    &= \left\|G(y) \right\|_{\infty,\gamma-\sigma}
    + \left\|DG(y)G(y) \right\|_{\infty,\gamma-\alpha-\sigma}
    + \left\|DG(y)G(y) \right\|_{\alpha,\gamma-2\alpha-\sigma}\\
    &+ \big\|R^{G(y)} \big\|_{\alpha,\gamma-\alpha-\sigma}
    + \big\|R^{G(y)} \big\|_{2\alpha,\gamma-2\alpha-\sigma}.
\end{align*}
The first term can be bounded due to the boundedness of $DG$ by
\begin{align*}
    \left\|G(y) \right\|_{\gamma,\cB_{\gamma-\sigma}} 
    \lesssim 1 + \left\|y \right\|_{\infty,\gamma}
    \leq 1 + \gubnorm{y}{G(y)}.
\end{align*}
For the second term we have
\begin{align*}
    \left\|DG(y)G(y) \right\|_{\infty,\gamma-\alpha-\sigma}
    \leq \left\|DG(y) \right\|_{\infty,\cL(\cB_{\gamma-\alpha},\cB_{\gamma-\alpha-\sigma})} \left\|G(y) \right\|_{\infty,\gamma-\alpha}
    \lesssim \gubnorm{y}{G(y)}.
\end{align*}
The third term can be bounded due to assumption (G) by
\begin{align*}
   \left\|DG(y)G(y) \right\|_{\alpha,\gamma-2\alpha-\sigma} 
   \lesssim \left\|y \right\|_{\alpha,\gamma-\alpha}. 
\end{align*}
Now \eqref{est:hoelder:y} yields
\begin{align*}
     \left\|DG(y)G(y) \right\|_{\alpha,\gamma-2\alpha-\sigma}
     \lesssim \left\|G(y)\right\|_{\infty,\gamma-\alpha} + \left\|R^y \right\|_{\alpha,\gamma-\alpha}
     \lesssim \gubnorm{y}{G(y)}.
\end{align*}
For the remainder terms we use~\eqref{remainder} and write
\begin{align*}
    R^{G(y)}_{s,t} = G(y_t) - G(y_s) - DG(y_s) G(y_s) X_{s,t}
    &= \int_0^1 DG(y_s + r(y_t-y_s))~\txtd r~(y_t-y_s) - DG(y_s)  G(y_s) X_{s,t} \\
    &= \int_0^1 \big(DG(y_s + r(y_t-y_s))-DG(y_s)\big)~\txtd r~G(y_s) X_{s,t}\\ 
    &+ \int_0^1 DG(y_s + r(y_t-y_s))~\txtd r~R^y_{s,t}. 
\end{align*}
Again using the boundedness of $DG$, we obtain for the first remainder term
\begin{align*}
    \big\|R^{G(y)} \big\|_{\alpha,\gamma-\sigma-\alpha}
    &\lesssim \left\|DG(y) \right\|_{\infty,\cL(\cB_{\gamma-\alpha},\cB_{\gamma-\alpha-\sigma})} \left\|G(y) \right\|_{\infty,\gamma-\alpha}
    + \left\|DG(y) \right\|_{\infty,\cL(\cB_{\gamma-\alpha},\cB_{\gamma-\alpha-\sigma})} \big\|R^{y} \big\|_{\alpha,\gamma-\alpha} \\
    &\lesssim \gubnorm{y}{G(y)}.
\end{align*}
For the second remainder term we apply \eqref{est:g} and use the boundedness of $DG$.
\begin{align*}
 \big\|R^{G(y)} \big\|_{2\alpha,\gamma-2\alpha-\sigma}
 \lesssim \left\| y\right\|_{\alpha,\cB_{\gamma-\alpha}}     + \left\|DG(y) \right\|_{\infty,\cL(\cB_{\gamma-2\alpha},\cB_{\gamma-2\alpha-\sigma})} \left\| R^y\right\|_{2\alpha,\gamma-2\alpha}
\end{align*}
Again, using \eqref{est:hoelder:y} we obtain the bound 
\begin{align*}
    \big\|R^{G(y)} \big\|_{2\alpha,\gamma-2\alpha-\sigma}
    \lesssim \gubnorm{y}{G(y)}.
\end{align*}
\end{proof}
Putting the results of Lemma~\ref{lemma:intitial},  \ref{drift},~\ref{rough:integral} and~\ref{smooth:fuction} together we obtain.
\begin{corollary}\label{cor:gubnorm:bound}
Let $F$ and $G$ satisfy the assumptions (F) and (G) and let $(y,G(y))\in D^{2\alpha}_{X,\gamma}$ be the  solution of \eqref{equation:main} on the time interval $[0,T]$ with initial data $y_0\in\cB_\gamma$. Then the following estimate holds true
\begin{align}\label{bound}
    \gubnorm{y}{G(y)} \lesssim 1+\left|y_0 \right|_{\cB_\gamma}+T^{\eta} \gubnorm{y}{G(y)},
\end{align}
where $\eta:=(\alpha -\sigma)\wedge (1-\delta)$.
\end{corollary}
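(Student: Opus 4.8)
The plan is to read off the three contributions to the solution from the mild formulation~\eqref{solution} and estimate each one separately. As a controlled rough path with Gubinelli derivative $G(y)$, the solution decomposes as
\[
(y,G(y)) = \big(S(\cdot)y_0,0\big) + \Big(\int_0^\cdot S(\cdot-s)F(y_s)~\txtd s,\,0\Big) + \Big(\int_0^\cdot S(\cdot-s)G(y_s)~\dx_s,\,G(y)\Big),
\]
where the three Gubinelli derivatives $0,0,G(y)$ indeed sum to $G(y)$ and the remainders add linearly. Since the norm in~\eqref{g:norm} is a sum of sup-norms and H\"older seminorms, it is subadditive in the pair $(y,y')$, so the triangle inequality reduces the task to bounding each summand. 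The first two are handed to me directly by Lemma~\ref{lemma:intitial} and Lemma~\ref{drift}, contributing $\lesssim |y_0|_{\cB_\gamma}$ and $\lesssim (1+\|y\|_{\infty,\gamma})T^{1-\delta}$.

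The heart of the argument is the rough convolution, which I would control by composing Lemma~\ref{smooth:fuction} with Lemma~\ref{rough:integral}. First, Lemma~\ref{smooth:fuction} places $(G(y),DG(y)G(y))$ in $D^{2\alpha}_{X,\gamma-\sigma}$ with $\gubnormpar{G(y)}{DG(y)G(y)}{\gamma-\sigma}\lesssim 1+\gubnorm{y}{G(y)}$. Then I apply Lemma~\ref{rough:integral} to this controlled rough path \emph{at base regularity $\gamma-\sigma$}; the convolution raises the regularity by $\sigma$, so the integral lands back in $D^{2\alpha}_{X,\gamma}$ and satisfies
\[
\gubnorm{\int_0^\cdot S(\cdot-s)G(y_s)~\dx_s}{G(y)} \lesssim |G(y_0)|_{\cB_{\gamma-\sigma}} + |DG(y_0)G(y_0)|_{\cB_{\gamma-\sigma-\alpha}} + T^{\alpha-\sigma}\big(1+\gubnorm{y}{G(y)}\big).
\]
The two initial-data terms are bounded by $1+|y_0|_{\cB_\gamma}$: for the first one uses that $G\colon\cB_\gamma\to\cB_{\gamma-\sigma}$ has bounded derivative and hence linear growth, and for the second that $DG$ is bounded on $\cB_{\gamma-\alpha}$ while $G(y_0)\in\cB_{\gamma-\sigma}\subset\cB_{\gamma-\alpha}$ with $|G(y_0)|_{\cB_{\gamma-\alpha}}\lesssim 1+|y_0|_{\cB_\gamma}$ (assumption (G) with $\theta=\alpha$).

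Collecting the three estimates and using $\|y\|_{\infty,\gamma}\leq\gubnorm{y}{G(y)}$ gives
\[
\gubnorm{y}{G(y)} \lesssim 1 + |y_0|_{\cB_\gamma} + \big(T^{1-\delta}+T^{\alpha-\sigma}\big) + \big(T^{1-\delta}+T^{\alpha-\sigma}\big)\gubnorm{y}{G(y)}.
\]
The last step is to fold the two powers of $T$ into a single $T^{\eta}$ with $\eta=(\alpha-\sigma)\wedge(1-\delta)$: since $T$ ranges over a compact interval, the bare term $T^{1-\delta}+T^{\alpha-\sigma}$ is $\lesssim 1$ and goes into the constant, while in the coefficient of $\gubnorm{y}{G(y)}$ each exponent is $\geq\eta$, so $T^{1-\delta}+T^{\alpha-\sigma}\lesssim T^{\eta}$; this yields exactly~\eqref{bound}. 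The only genuine subtlety is the bookkeeping in the composition of the two lemmas, namely verifying that the regularity loss $\sigma$ coming from $G$ is precisely offset by the smoothing gain $\sigma$ of the rough convolution so that the target space is again $\cB_\gamma$, and that the spurious initial-data terms from Lemma~\ref{rough:integral} enter only \emph{linearly} in $|y_0|_{\cB_\gamma}$ — it is this structural fact, together with the boundedness hypotheses in (G), that removes the quadratic dependence present in a naive application of Lemma~4.7 of~\cite{GHN}.
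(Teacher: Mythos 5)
Your proposal is correct and follows essentially the same route as the paper: decompose the mild solution into the three summands, invoke Lemma~\ref{lemma:intitial} and Lemma~\ref{drift} for the first two, and handle the rough convolution by composing Lemma~\ref{smooth:fuction} (landing in $D^{2\alpha}_{X,\gamma-\sigma}$) with Lemma~\ref{rough:integral} (regaining $\sigma$), with the initial-data terms controlled linearly via the boundedness of $DG$. The only cosmetic difference is that you keep the bare $T^{1-\delta}+T^{\alpha-\sigma}$ term explicit before absorbing it into the constant, which the paper does implicitly.
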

\begin{proof}
Since $(y,G(y))$ solves \eqref{equation:main} we have 
\begin{align*}
    \gubnorm{y}{G(y)}
    &\leq \gubnorm{S(\cdot)y_0}{0}
    + \gubnorm{\int_0^\cdot S(\cdot-r) F(y_r)~\txtd r}{0}\\
    &+ \gubnorm{\int_0^\cdot S(\cdot-r) G(y_r)~\dx_r}{G(y)}.
\end{align*}
Thus, 
\begin{align*}
    \gubnorm{y}{G(y)}
  &  \lesssim \left|y_0 \right|_{\cB_{\gamma}}
    +T^{1-\delta} (1+\left\|y \right\|_{\infty,\gamma}) 
    + \left|G(y_0) \right|_{\cB_{\gamma-\sigma}} 
    + \left|DG(y_0)G(y_0) \right|_{\gamma-\alpha-\sigma}\\
   & + T^{\alpha-\sigma}\gubnormpar{G(y)}{DG(y)G(y)}{\gamma-\sigma}.
\end{align*}
Finally Lemma \ref{smooth:fuction} as well as the boundedness of $DG$ yield
\begin{align*}
    \gubnorm{y}{G(y)}
    &\lesssim 1+\left|y_0 \right|_{\cB_{\gamma}}
    +T^{1-\delta} \gubnorm{y}{G(y)} 
    + T^{\alpha-\sigma}\gubnorm{y}{G(y)}\\
    &\lesssim 1+\left|y_0 \right|_{\cB_{\gamma}} + T^{(1-\delta)\wedge (\alpha-\sigma)}\gubnorm{y}{G(y)}.
\end{align*}
\end{proof}

Having the bound~\eqref{bound} for the $D^{2\alpha}_{X,\gamma}$-norm, we formulate an a-priori bound for the solution of~\eqref{equation:main}. \\
The proof of the following result relies on a concatenation argument~\cite[Lemma 5.6]{HN20}. Consequently, it is necessary to consider several norms on subintervals of $[0,T]$. Note that in contrast to the rest of this manuscript where the underlying time-interval is suppressed for a better readability, in the following lemma we indicate this additional time-dependence. The proof of the next statement relies on similar arguments to~\cite[Lemma 5.5]{HN20}.
\begin{lemma}\label{lemma:apriori}
Let $F$ and $G$ satisfy the assumptions (F) and (G) and let $(y,G(y))\in D^{2\alpha}_{X,\gamma}$ be the  solution of \eqref{equation:main} on the time interval $[0,T]$ with $T>0$ and initial data $y_0\in\cB_\gamma$. Let $r = 1 \vee \left|y_0 \right|_{\cB_{\gamma}}$. Then there exist constants $M_1,M_2>0$ such that
\begin{align*}
    \left\|y \right\|_{\infty,\gamma,[0,T]} \leq M_1 r e^{M_2 T}.
\end{align*}
\end{lemma}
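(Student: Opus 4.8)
The central tool is Corollary~\ref{cor:gubnorm:bound}, which gives, on \emph{any} subinterval $[a,b]\subset[0,T]$ and with the obvious localization of all norms,
\begin{align*}
    \gubnormpar{y}{G(y)}{\gamma,[a,b]} \lesssim 1+\left|y_a \right|_{\cB_\gamma}+(b-a)^{\eta} \gubnormpar{y}{G(y)}{\gamma,[a,b]},
\end{align*}
where $\eta=(\alpha-\sigma)\wedge(1-\delta)>0$ and the implicit constant $C$ is uniform in the interval (it depends only on $\alpha,\gamma,\rho_\alpha(\bX),F,G$, not on the initial data nor on the particular subinterval).

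**Plan.**
The plan is to choose a fixed step size $h>0$ small enough that $Ch^{\eta}\le\tfrac12$, where $C$ is the constant from the displayed inequality; then on every subinterval of length $\le h$ the quadratic-free a-priori bound can be absorbed to yield a \emph{linear} recursion. First I would fix such an $h$ and partition $[0,T]$ into $N=\lceil T/h\rceil$ consecutive intervals $[t_k,t_{k+1}]$ with $t_k=kh$ (truncating the last one at $T$). On each $[t_k,t_{k+1}]$ I apply the localized Corollary and absorb the term $C h^{\eta}\gubnormpar{y}{G(y)}{\gamma,[t_k,t_{k+1}]}$ into the left-hand side, obtaining
\begin{align*}
    \gubnormpar{y}{G(y)}{\gamma,[t_k,t_{k+1}]} \le 2C\big(1+\left|y_{t_k}\right|_{\cB_\gamma}\big).
\end{align*}
Since $\left\|y\right\|_{\infty,\gamma,[t_k,t_{k+1}]}\le \gubnormpar{y}{G(y)}{\gamma,[t_k,t_{k+1}]}$ and in particular $\left|y_{t_{k+1}}\right|_{\cB_\gamma}\le \gubnormpar{y}{G(y)}{\gamma,[t_k,t_{k+1}]}$, this gives the one-step growth estimate
\begin{align*}
    \left|y_{t_{k+1}}\right|_{\cB_\gamma}\le 2C\big(1+\left|y_{t_k}\right|_{\cB_\gamma}\big).
\end{align*}

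**Closing the recursion.**
Setting $a_k:=\left|y_{t_k}\right|_{\cB_\gamma}$ and $L:=2C$, the recursion $a_{k+1}\le L(1+a_k)$ iterates to $a_k\le L^{k}a_0+L\frac{L^{k}-1}{L-1}$ (assuming $L>1$, which we may), so $a_k\lesssim r\,L^{k}$ with $r=1\vee\left|y_0\right|_{\cB_\gamma}$. Because each step advances time by $h$, we have $k\le T/h$, hence $L^{k}=e^{k\log L}\le e^{(T/h)\log L}=e^{M_2 T}$ with $M_2:=h^{-1}\log L$. Combining this with the one-step sup bound $\left\|y\right\|_{\infty,\gamma,[t_k,t_{k+1}]}\le L(1+a_k)\lesssim r\,L^{k+1}$ and taking the maximum over $k=0,\dots,N-1$ yields
\begin{align*}
    \left\|y\right\|_{\infty,\gamma,[0,T]}\le M_1 r\,e^{M_2 T}
\end{align*}
for suitable $M_1,M_2>0$, which is the claim. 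The role of $r=1\vee\left|y_0\right|_{\cB_\gamma}$ is precisely to fold the additive constant $1$ in $L(1+a_k)$ into a purely multiplicative bound in $r$.

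**Main obstacle.**
The routine part is the recursion; the delicate point I expect to have to argue carefully is the \emph{uniformity of the constant} $C$ across subintervals and the compatibility of the concatenation. Corollary~\ref{cor:gubnorm:bound} is stated on $[0,T]$ starting from $y_0$, so I must verify that restarting the equation at time $t_k$ with initial datum $y_{t_k}$ produces a controlled rough path on $[t_k,t_{k+1}]$ whose Gubinelli derivative is again $G(y)$ and whose norm obeys the same estimate with the \emph{same} constant (this is where the stated uniformity of $C$ in $T$ on compact intervals, and its independence of the initial data, is essential). This is exactly the content of the cited concatenation argument in~\cite[Lemma 5.6]{HN20}: one checks that the local solution on $[t_k,t_{k+1}]$ with datum $y_{t_k}$ coincides with the restriction of the global object, so the norms glue and the per-step constant does not degrade as $k$ grows. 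Once this uniformity is in hand, choosing $h$ independent of $k$ is legitimate and the linear recursion closes as above.
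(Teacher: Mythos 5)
Your proposal is correct and follows essentially the same strategy as the paper: partition $[0,T]$ into subintervals short enough that the $T^{\eta}$-term from Corollary~\ref{cor:gubnorm:bound} can be absorbed, restart the equation at each node via the concatenation argument of \cite[Lemma 5.6]{HN20}, and iterate the resulting linear one-step bound to obtain geometric growth in the number of steps, hence an exponential bound in $T$. The only cosmetic difference is that the paper phrases the recursion as $\left\|y\right\|_{\infty,\gamma,[\frac{k}{N}T,\frac{k+1}{N}T]}\leq (2C)^{k+1}r$ with $N$ chosen so that $\frac14 < C(T/N)^{\eta}\leq\frac12$, rather than your explicit $a_{k+1}\leq L(1+a_k)$ form, but the two are equivalent.
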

\begin{proof}
For all $\Bar{T}\in (0,T]$ the restriction of $(y,G(y))$ on $[0,\Bar{T}]$ is a solution of \eqref{equation:main} on $[0,\Bar{T}]$. Thus, by Corollary \ref{cor:gubnorm:bound} we know that there exists a constant $C \geq 1$ such that  
\begin{align*}
      \gubnormpar{y}{G(y)}{\gamma,[0,\Bar{T}]} 
      \leq C \big(\rho+\Bar{T}^{\eta} \gubnormpar{y}{G(y)}{\gamma,[0,\Bar{T}]}\big),
\end{align*}
where $\eta=(\alpha -\sigma)\wedge (1-\delta)$.\\
 Hence, for all $\Bar{T}$ small enough such that $C\Bar{T}^\eta\leq \frac{1}{2}$ we obtain the bound 
\begin{align*}
  \left\| y \right\|_{\infty,\gamma,[0,\Bar{T}]}  \leq \gubnormpar{y}{G(y)}{\gamma,[0,\Bar{T}]} 
  \leq 2C r.
\end{align*}
If $CT^{\eta}\leq \frac{1}{2}$, the proposed statement holds true choosing $M_1\geq 2C$ and an arbitrary $M_2>0$.
Otherwise, we choose $N \in \IN$ (not necessarily unique) such that $\frac{1}{4}<C\big(\frac{T}{N}\big)^\eta \leq \frac{1}{2}$. This is possible because $\eta <1$. Then
\begin{align*}
     \left\|y \right\|_{\infty,\gamma,[0,\frac{T}{N}]}
     \leq 2C r.
\end{align*}
Further, a concatenation argument yields for all $k \in \left\{0, \ldots, N-1 \right\}$
\begin{align*}
    \left\|y \right\|_{\infty,\gamma,[\frac{k}{N}T,\frac{k+1}{N}T]} \leq (2C)^{k+1} r.
\end{align*}
Consequently,
\begin{align*}
    \left\|y \right\|_{\infty,\gamma,[0,T]}
    = \max_{k \in \{0,\ldots,N-1 \}} \left\|y \right\|_{\infty,\gamma,[\frac{k}{N}T,\frac{k+1}{N}T]} 
    \leq (2C)^N r.
\end{align*}
Finally, using that $\frac{1}{4}<C\big(\frac{T}{N}\big)^\eta $, consequently  $N < (4C)^{\frac{1}{\eta}} T$ we obtain
the claim with $M_1\geq (2C)^{(4C)^{\frac{1}{\eta}}}$ and $M_2 \geq \log(2C)$.
\end{proof}
Lemma \ref{lemma:apriori} guarantees that the solution of \eqref{equation:main} does not exhibit finite-time blow up under the assumptions (F) and (G). Therefore we formulate our main result.
\begin{theorem}\label{global:sol}
Let $T>0$, $F$ and $G$ satisfy the assumptions (F) and (G), $\bX=(X,\xx)$ be an $\alpha$-H\"older rough path and let $y_0\in\cB_\gamma$.
Then there exists a unique global-in-time solution $(y,G(y))\in D^{2\alpha}_{X,\gamma}([0,T])$ of \eqref{equation:main}. 
\end{theorem}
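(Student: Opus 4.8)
The plan is to combine the local existence result (Theorem~\ref{thm:local}) with the a-priori bound of Lemma~\ref{lemma:apriori} via a standard maximal-interval / continuation argument. The key observation is that Theorem~\ref{thm:local} gives a local solution whose existence time $T^*$ depends on the initial data only through a bound $\rho$ on $|y_0|_{\cB_\gamma}$; hence if we can control $|y_t|_{\cB_\gamma}$ uniformly as $t$ increases, the local time step cannot shrink to zero and the solution extends to all of $[0,T]$.

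Concretely, I would first define the maximal existence time
\begin{align*}
    T_{\max} := \sup\{\,\tau\in(0,T] : \text{there exists a unique solution } (y,G(y))\in D^{2\alpha}_{X,\gamma}([0,\tau]) \text{ of }\eqref{equation:main}\,\},
\end{align*}
which is well-defined and positive by Theorem~\ref{thm:local}. Uniqueness on overlapping intervals (again from Theorem~\ref{thm:local}) guarantees these local solutions are consistent, so they patch together to a single solution on $[0,T_{\max})$. The goal is then to show $T_{\max}=T$ and that the solution in fact extends up to and including $T_{\max}$.

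The heart of the argument is to rule out $T_{\max}<T$ by contradiction. Suppose $T_{\max}<T$. By Lemma~\ref{lemma:apriori} applied on each interval $[0,\tau]$ with $\tau<T_{\max}$, we have the uniform bound $\|y\|_{\infty,\gamma,[0,\tau]}\leq M_1 r\, e^{M_2 T}=:R$, where $r=1\vee|y_0|_{\cB_\gamma}$ and the constants $M_1,M_2$ depend only on the fixed data $\alpha,\gamma,\rho_\alpha(\bX),F,G$ and the time horizon $T$, but \emph{not} on $\tau$. In particular $\sup_{t\in[0,T_{\max})}|y_t|_{\cB_\gamma}\leq R$. I would then invoke Theorem~\ref{thm:local} with $\rho:=R$ to obtain a uniform local existence time $T^*=T^*(\alpha,\gamma,R,X,F,G)>0$, valid for \emph{any} initial datum bounded by $R$ in $\cB_\gamma$. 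Choosing a starting point $t_0\in(T_{\max}-T^*,T_{\max})$ and restarting the equation from the value $y_{t_0}$ (which satisfies $|y_{t_0}|_{\cB_\gamma}\leq R$) produces a solution on $[t_0,t_0+T^*]$; since $t_0+T^*>T_{\max}$, gluing this extension to the solution on $[0,t_0]$ via uniqueness contradicts the maximality of $T_{\max}$. Hence $T_{\max}=T$, and the same restart argument at $t_0<T$ with margin $T^*$ shows the solution reaches the closed endpoint, giving $(y,G(y))\in D^{2\alpha}_{X,\gamma}([0,T])$. Uniqueness on $[0,T]$ follows from the local uniqueness in Theorem~\ref{thm:local} together with the consistency of the patched solutions.

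The main obstacle I anticipate is not the topological continuation scheme itself, which is routine, but ensuring that the restart at the intermediate time $t_0$ is legitimate within the controlled-rough-path framework: one must verify that the restarted problem uses the correct Gubinelli derivative $G(y_{t_0})$ and that the shifted rough path $\bX$ on $[t_0,T]$ retains the same regularity constants, so that $T^*$ genuinely depends only on the $\cB_\gamma$-bound $R$ and on $\rho_\alpha(\bX)$ (which is controlled on all of $[0,T]$), and not on the base point $t_0$. This is precisely where the structural choice $y'=G(y)$ in~\eqref{solution} and the translation-invariance of the H\"older rough-path seminorms are used; once these are in place, the uniform-step continuation closes the argument.
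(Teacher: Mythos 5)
Your proposal is correct and rests on exactly the same two pillars as the paper's proof: the a-priori bound of Lemma~\ref{lemma:apriori} giving a uniform $\cB_\gamma$-bound $\tilde r$ on any solution, and Theorem~\ref{thm:local} giving a local existence time depending only on that bound, so that the solution can be continued in uniformly sized steps across $[0,T]$. The only difference is presentational: the paper concatenates forward in $N$ equal steps of length $T/N$, whereas you phrase it as a maximal-interval contradiction argument; both reduce to the same uniform-step continuation, and your closing remarks about restarting with Gubinelli derivative $G(y_{t_0})$ and the translation invariance of the rough-path seminorms address exactly the gluing step the paper performs implicitly.
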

\begin{proof}
Let $r = 1 \vee |y_0|_{\cB_{\gamma}}$. Then, by Lemma \ref{lemma:apriori}, each solution of \eqref{equation:main} can be bounded by 
\begin{align*}
    \left\|y \right\|_{\infty,\gamma}
    \leq M_1 r e^{M_2 T} =: \Tilde{r}.
\end{align*}
Applying Theorem \ref{thm:local} with $\left|y_0 \right|_{\cB_{\gamma}}\leq \Tilde{r}$ yields the existence of $N=N(\alpha,\gamma,\Tilde{r},\rho_\alpha(\bX),F,G)$ such that there exists a unique local solution of \eqref{equation:main} on the time interval $[0,\frac{T}{N}]$ with initial condition $y_0$. \\
Since $\big|y_{\frac{T}{N}} \big|_{\cB_{\gamma}}\leq \Tilde{r}$, we furthermore obtain the existence of a unique local solution of \eqref{equation:main} on the time interval $[0,\frac{T}{N}]$ with initial data $y_{\frac{T}{N}}$. Concatenating both solutions provides a solution of \eqref{equation:main} on the time interval $[0,2\frac{T}{N}]$ with initial data $y_0$. 
Iterating this argument one can construct a solution on the whole time interval $[0,T]$.
\end{proof}
Based on our global well-posedness result, under suitable assumptions on the driving
rough path, we are able to construct a random dynamical system corresponding to~\eqref{equation:main}. To this aim we introduce some concepts from the theory of random dynamical systems~\cite{Arnold}. The following definition describes a model of the driving noise.

\begin{definition}\label{mds} 
Let $(\Omega,\mathcal{F},\mathbb{P})$ stand for a probability space and 
$\theta:\mathbb{R}\times\Omega\rightarrow\Omega$ be a family of 
$\mathbb{P}$-preserving transformations (i.e.,~$\theta_{t}\mathbb{P}=
\mathbb{P}$ for $t\in\mathbb{R}$) having the following properties:
\begin{description}
		\item[(i)] The mapping $(t,\omega)\mapsto\theta_{t}\omega$ is 
		$(\mathcal{B}(\mathbb{R})\otimes\mathcal{F},\mathcal{F})$-measurable, where 
		$\mathcal{B}(\cdot)$ denotes the Borel sigma-algebra;
		\item[(ii)] $\theta_{0}=\textnormal{Id}_{\Omega}$;
		\item[(iii)] $\theta_{t+s}=\theta_{t}\circ\theta_{s}$ for all 
		$t,s,\in\mathbb{R}$.
\end{description}
Then the quadrupel $(\Omega,\mathcal{F},\mathbb{P},(\theta_{t})_{t\in\mathbb{R}})$ 
is called a metric dynamical system.
\end{definition}

\begin{definition}
\label{rds} 
A continuous random dynamical system on a separable Banach space $\cX$ over a metric dynamical 
system $(\Omega,\mathcal{F},\mathbb{P},(\theta_{t})_{t\in\mathbb{R}})$ 
is a mapping $$\varphi:[0,\infty)\times\Omega\times \cX\to \cX,
\mbox{  } (t,\omega,x)\mapsto \varphi(t,\omega,x), $$
which is $(\mathcal{B}([0,\infty))\otimes\mathcal{F}\otimes
\mathcal{B}(\cX),\mathcal{B}(\cX))$-measurable and satisfies:
	\begin{description}
		\item[(i)] $\varphi(0,\omega,\cdot{})=\textnormal{Id}_{\cX}$ 
		for all $\omega\in\Omega$;
		\item[(ii)]$ \varphi(t+\tau,\omega,x)=
		\varphi(t,\theta_{\tau}\omega,\varphi(\tau,\omega,x)), 
		\mbox{ for all } x\in \cX, ~t,\tau\in[0,\infty),~\omega\in\Omega;$
		\item[(iii)] $\varphi(t,\omega,\cdot{}):\cX\to \cX$ is 
		continuous for all $t\in[0,\infty)$ and all $\omega\in\Omega$.
	\end{description}
\end{definition}

The second property in Definition~\ref{rds} is referred to as the 
cocycle property. The generation of a random dynamical system from an It\^{o}-type stochastic partial differential equation (SPDE) has been a long-standing open problem, since Kolmogorov's theorem breaks down for random fields parametrized by infinite-dimensional Banach spaces.~As a consequence it is not obvious how to obtain a random dynamical system from an SPDE, since its solution is defined almost surely, which contradicts the cocycle property. Particularly, this means that there are exceptional sets which depend on the initial condition and it is not clear how to define a random dynamical system if more than countably many exceptional sets occur. This issue does not occur in a pathwise approach. Under suitable assumptions on the coefficients, rough path driven equations generate random
dynamical systems provided that the driving rough path forms a rough path cocycle, as established in~\cite{BRiedelScheutzow}.\\
Let $(\Omega,\mathcal{F},\mathbb{P},(\theta_{t})_{t\in\mathbb{R}})$ be a metric dynamical system as in Definition~\ref{mds}. We say that 
\begin{align*}
    \mathbf{X}=(X,\xx):\Omega\to C^{\alpha}_{\text{loc}}([0,\infty);\mathbb{R}^d) \times C^{2\alpha}_{\text{loc}}([0,\infty);\mathbb{R}^{d\times d}) 
\end{align*}
is a continuous ($\alpha$-H\"older) rough path cocycle if $\mathbf{X}|_{[0,T]}$ is a continuous $\alpha$-H\"older rough path for every $T>0$ and for every $\omega\in\Omega$ and the following cocycle property holds true for every $s,t\in[0,\infty)$ and $\omega\in\Omega$
\begin{align*}
    &X_{s,s+t}(\omega)= X_t(\theta_s\omega)\\
    &\xx_{s,s+t}(\omega)=\xx_{0,t}(\theta_s\omega).
\end{align*}
According to~\cite[Section 2]{BRiedelScheutzow} rough path lifts of various stochastic processes define cocycles.
These include Gaussian processes with stationary increments under certain
assumption on the covariance function~\cite[Chapter 10]{FrizHairer} and particulary apply to the fractional Brownian motion with Hurst index $H>\frac{1}{4}$. Recall that here we fixed the $\alpha$-H\"older regularity of the rough path $\alpha\in(\frac{1}{3},\frac{1}{2})$, consequently the results obtained apply to fractional Brownian motion for $H\in(\frac{1}{3},\frac{1}{2}]$. \\

Due to the previous deliberations and Theorem~\ref{global:sol} we immediately infer.
\begin{theorem}\label{thm:rds}
Under the assumptions of Theorem~\ref{global:sol}, the solution operator of~\eqref{equation:main} generates a continuous random dynamical system on $\cB_\gamma$. 
\end{theorem}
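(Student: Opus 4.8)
The plan is to deduce Theorem~\ref{thm:rds} as a consequence of the pathwise global well-posedness established in Theorem~\ref{global:sol}, combined with the general machinery of~\cite{BRiedelScheutzow} which converts rough path cocycles into random dynamical systems. The key point, emphasised in the discussion preceding the statement, is that working pathwise eliminates the usual obstruction from exceptional nullsets: for \emph{every} $\omega\in\Omega$ and every initial datum $y_0\in\cB_\gamma$, Theorem~\ref{global:sol} furnishes a unique global solution $(y,G(y))\in D^{2\alpha}_{X,\gamma}([0,T])$, so the candidate solution operator is genuinely defined everywhere rather than almost surely.

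First I would fix the rough path cocycle $\bX=(X,\xx)$ over the metric dynamical system $(\Omega,\cF,\IP,(\theta_t)_{t\in\mathbb{R}})$ as described just before the statement, and define the candidate cocycle
\begin{align*}
    \varphi(t,\omega,y_0) := y_t^{\bX(\omega)}(y_0),
\end{align*}
where $y^{\bX(\omega)}(y_0)$ denotes the first component of the unique global solution of~\eqref{equation:main} driven by $\bX(\omega)$ with initial condition $y_0$, whose existence and uniqueness for every $\omega$ is guaranteed by Theorem~\ref{global:sol}. I would then verify the three defining properties of Definition~\ref{rds} in turn. Property (i), namely $\varphi(0,\omega,\cdot)=\Id$, is immediate from the variation-of-constants formula~\eqref{mild} evaluated at $t=0$. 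Property (iii), continuity of $\varphi(t,\omega,\cdot)$ on $\cB_\gamma$, follows from continuous dependence of the solution on the initial datum in the $D^{2\alpha}_{X,\gamma}$-norm, which is standard in the contraction-mapping construction underlying Theorem~\ref{thm:local} and controls the $\cB_\gamma$-norm via the embedding $D^{2\alpha}_{X,\gamma}\hookrightarrow C([0,T];\cB_\gamma)$.

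The heart of the argument is the cocycle property (ii). Here I would exploit the shift identities $X_{s,s+t}(\omega)=X_t(\theta_s\omega)$ and $\xx_{s,s+t}(\omega)=\xx_{0,t}(\theta_s\omega)$: these ensure that the rough path driving the equation on the time-shifted interval $[s,s+t]$ with initial datum $\varphi(s,\omega,y_0)$ coincides, after reparametrisation, with the rough path $\bX(\theta_s\omega)$ driving the equation on $[0,t]$. Combined with the uniqueness of global solutions from Theorem~\ref{global:sol}, a concatenation argument identical in spirit to the one used in the proof of Theorem~\ref{global:sol} then yields
\begin{align*}
    \varphi(t+s,\omega,y_0)=\varphi(t,\theta_s\omega,\varphi(s,\omega,y_0)).
\end{align*}
Finally, the joint $(\cB([0,\infty))\otimes\cF\otimes\cB(\cB_\gamma),\cB(\cB_\gamma))$-measurability of $\varphi$ follows from the pathwise continuity in $(t,y_0)$ together with measurability of $\omega\mapsto\bX(\omega)$ in the rough path metric and continuity of the solution map as a function of the driving rough path, which is again part of the local theory in~\cite{GHN}. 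The main obstacle I anticipate is the bookkeeping in verifying (ii) cleanly: one must be careful that the Gubinelli-derivative component and the controlled-rough-path structure are compatible under time-shifting, so that the concatenated object is genuinely the unique solution on the larger interval rather than merely agreeing in the $\cB_\gamma$-component; uniqueness in $D^{2\alpha}_{X,\gamma}$ is what makes this rigorous.
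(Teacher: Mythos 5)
Your proposal is correct and follows exactly the route the paper takes: the paper offers no written proof at all, simply declaring the theorem an immediate consequence of the preceding discussion of rough path cocycles (and the reference to~\cite{BRiedelScheutzow}) together with the pathwise global well-posedness of Theorem~\ref{global:sol}. Your outline merely makes explicit the verification of Definition~\ref{rds} that the paper leaves implicit, and it identifies the right ingredients (pathwise uniqueness, the shift identities for $\bX$, concatenation, and continuity of the local solution map).
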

\section{Examples}\label{sec:ex}
\begin{example}
Let $k\in\mathbb{N}$, $p>1$ and $\mathbb{T}^d$ stands for the $d$-dimensional torus. Furthermore, let $\mathbf{X}=(X,\xx)$ be the rough path lift of a fractional Brownian motion $X$ with Hurst index $H\in(1/3,1/2]$ and let $\sigma<H$. We consider the semilinear parabolic rough partial differential equation
\begin{align}\label{equation:ex}
    \begin{cases}
  \txtd y_t (x) = [\Delta y_t(x) + f(y_t(x))]~\txtd t + g(x) (-\Delta)^\sigma y_t(x)~\dx_t\\
  y_0(x) \in H^{k,p}(\mathbb{T}^d).
    \end{cases}
\end{align}
In this case it is well-known that $\cB_\gamma=H^{k+2\gamma,p}(\mathbb{T}^d)$ for $\gamma\in\mathbb{R}$, where $H^{k,p}(\mathbb{T}^d)$ are Bessel potential spaces~\cite[Chapter 16]{Yagi}.
Moreover $(-\Delta)^\sigma:\cB_\gamma\to \cB_{\gamma-\sigma}$ for all $\gamma\in \mathbb{R}$ and the multiplication with a smooth function $g$ is a smooth operation from $\cB_{\gamma-\sigma}$ into itself. Therefore $G(y):=g(x)(-\Delta)^\sigma y$ satisfies assumption (G). Choosing a nonlinear term $f$
satisfying assumption (F), we obtain that~\eqref{equation:ex} has a global-in-time solution. 
\end{example}
More general, an example for linear operator $A$ in~\eqref{equation:main} is  the $L^p$-realization (for $1<p<\infty$) of a strongly elliptic operator with suitable boundary conditions~\cite[Section 7.3]{Pazy}.
\begin{example}
Let $\cO$ stand for an open bounded domain with smooth boundary in $\mathbb{R}^d$ and define
\begin{align}
    A(x,D) u = \sum\limits_{k,l=1}^n \frac{\partial }{\partial x_k} \big(a_{k,l}(x) \frac{\partial u}{\partial x_l}\big), 
\end{align}
where the coefficients $a_{k,l}(x)=a_{l,k}(x)$ are real-valued and continuously differentiable on $\overline{\cO}$ and $A(x,D)$ is strongly elliptic, i.e. there exists a constant $C$ such that
\begin{align*}
    \sum\limits_{k,l=1}^n a_{k,l}(x)\xi_k\xi_l \geq C \sum\limits_{k=1}^n \xi^2_k 
\end{align*}
for all $\xi_k\in\mathbb{R}$, $1\leq k \leq n$. In this case, one introduces the $L^p$-realization of $A(x,D)$ with Dirichlet boundary conditions as follows
\begin{align*}
   & A y = A(x,D) y,~~\text{ for } y \in D(A)\\
   & D(A)=H^{2,p}(\cO)\cap H^{1,p}_0(\cO).
\end{align*}
Then the operator $A$ generates an analytic $C_0$-semigroup in $L^p(\cO)$ and $\cB_\gamma=H^{2\gamma}(\cO)$~\cite[Theorem 3.6]{Pazy} and~\cite[Chapter 16]{Yagi}.
\end{example}
\begin{remark}
The theory developed in this work can be extended to time-dependent operators $A(t)$ generating parabolic evolution families $(U(t,s))_{t\geq s}$ on a separable Banach space $\cB$. Analogously, one can work with the monotone family of interpolation spaces $(\cB_\gamma)_{\gamma\in\mathbb{R}}$ as in Definition~\ref{raum},  satisfying for $t>s$ similar estimates to~\eqref{hg:1} and~\eqref{hg:2}, i.e. 
\begin{align}
|(U(t,s)-\text{\em Id}) x|_{\gamma}&\lesssim |t-s|^\sigma |x|_{\gamma+\sigma}\\
 |U(t,s)x|_{\gamma+\sigma}&\lesssim |t-s|^{-\sigma}|x|_\gamma.
\end{align}
For further applications see~\cite{GHN,Veraar}. 
\end{remark}



\begin{thebibliography}{10}

\bibitem{Arnold}
L.~Arnold.
\newblock{\em Random Dynamical Systems}. Springer, Berlin Heidelberg, Germany, 2003.

\bibitem{BRiedelScheutzow}
I.~Bailleul, S.~Riedel, M.~Scheutzow.
\newblock Random dynamical system, rough paths and rough flows.
\newblock {\em J. Differential. Equat.}, 262(12):5792--5823, 2017.

\bibitem{Weber}
T.~Bonnefoi, A.~Chandra, A.~Moinat and H.~Weber.
\newblock A priori bounds for rough differential equations with a non-linear damping term.
\newblock {\em arXiv:2011.06645}.

\bibitem{Hofmanova2}
J.~Cardona, M.~Hofmanov\'a, T.~Nilssen and N.~Rana.
\newblock Random dynamical system generated by the 3D Navier-Stokes equation with rough transport noise.
\newblock {\em arXiv:2104.14312}.

\bibitem{DeyaGubinelliTindel}
A.~Deya, M.~Gubinelli and S.~Tindel.
\newblock{Non-linear rough heat equations}.
\newblock{\em Probab. Theory Relat. Fields},
\newblock 153(1--2):97--147, 2012.

	\bibitem{Gess}
	B.~Fehrman and B.~Gess.
	\newblock Well-posedness of stochastic porous media equations with nonlinear, conservative noise.
	\newblock {\em Arch.~Ration.~Mech.~Anal.}, 233:249-322, 2019.

\bibitem{FrizHairer}
P.K. Friz and M. Hairer. 
\newblock {\em A course on rough paths with an introduction to regularity structures}. Second ed., Springer, 2020.

\bibitem{GHairer}
A.~Gerasimovics and M.~Hairer.
\newblock H\"ormander's theorem for semilinear SPDEs.
\newblock {\em Electron. J. Probab.}, 24:1--56, 2019.

\bibitem{GHN}
A.~Gerasimovics, A.~Hocquet and T.~Nilssen.
\newblock Non-autonomous rough semilinear PDEs and the
multiplicative Sewing Lemma.
\newblock {\em arXiv:1907.13398}.

	\bibitem{Gubinelli}
		M.~Gubinelli.
		\newblock Controlling rough paths.
		\newblock {\em J. Func. Anal}. 216(1):86--140, 2004.
		
		
		\bibitem{GubinelliTindel}
		M.~Gubinelli and S.~Tindel.
		\newblock Rough evolution equations.
		\newblock {\em Ann. Probab}. 38(1):1--75, 2010.


\bibitem{HN19} 
R.~Hesse and A.~Neam\c tu.  \newblock Local mild solutions for rough stochastic partial differential equations.
\newblock {\em J.~ Differential Equat.}, 267(11):6480-6538, 2019.

\bibitem{HN20}
R.~Hesse and A.~Neam\c tu
\newblock Global solutions and random dynamical systems for rough evolution equations.
\newblock {\em Discrete Contin. Dyn. Syst.}, 25(7):2723--2748, 2020.

\bibitem{Hofmanova1}
M.~Hofmanov\'a, J.-M.~Leahy and T.~Nilssen.
\newblock On a rough perturbation of the Navier-Stokes system and its vorticity formulation.
\newblock{\em Annals Appl. Probab.}, 31(2), 2021.

\bibitem{HH}
A.~Hoquet and M.~Hofmanov\'a.
\newblock An energy method for rough partial differential equations.
\newblock {\em J. Differential Equat.}, 265(4): 1407--1466, 2018.

\bibitem{HNS20}
A.~Hocquet, T.~Nilssen and W.~Stannat.
\newblock Generalized Burgers equation with rough transport noise.
\newblock {\em Stoch. Proc. Appl.}, 130(4):2159-2184, 2020.

	\bibitem{Lunardi}
	A.~Lunardi.
	\newblock{\em Analytic semigroups and optimal regularity in parabolic problems}.
	\newblock Birkh\"auser, 1995.

\bibitem{Pazy}
A.~Pazy. 
\newblock {\em Semigroups of Linear Operators and Applications to Partial Differential Equations}.
Springer Applied Mathematical Series. Springer–Verlag, Berlin, 1983.

\bibitem{Veraar}
M.~Veraar.
\newblock Non-autonomous stochastic evolution equations and applications to stochastic partial differential equations.
\newblock {\em J.~Evol.~Equ.}, 10:85--127, 2010.

\bibitem{Yagi}
A.~Yagi.
\newblock {\em Abstract Parabolic Evolution Equations and their Applications}. Springer, 2010.


\end{thebibliography}
\end{document}